\documentclass{siamart251216}
\usepackage{verbatim}
\usepackage{mathrsfs}
\usepackage{amsmath}
\usepackage{amssymb}
\usepackage{graphicx}

\makeatletter
\usepackage{mathrsfs}
\usepackage{float}

\newtheorem{thm}{Theorem}
\newtheorem{rem}{Remark}
\newtheorem{cor}{Corollary}
\newtheorem{prop}{Proposition}
\newtheorem{example}{Example}
\newtheorem{lem}{Lemma}
\newtheorem{defn}{Definition}

\usepackage{babel}

\makeatother

\begin{document}
\title{Stable Dynamical Mass Transportation via port-Hamiltonian Structure}
\author{Dongjun Wu 
	\thanks{D. Wu is with the Department of Electrical Engineering, ESAT-STADIUS, KU Leuven, Leuven, Belgium}
	\and 
	Yankai Lin
	\thanks{Y. Lin is with the School of Computer Science and Engineering, Wuhan Institute of Technology,
	Wuhan, China}
	\and
	Gang Feng
	\thanks{G. Feng is with the Department of Mechanical Engineering, City University of Hong Kong, Hong
	Kong, China}
}

\maketitle
\begin{abstract}
Dynamical optimal mass transportation over nonlinear systems is often hindered by the curse of dimensionality and open-loop implementation. We propose an alternative framework that sacrifices exact optimality in favor of stability, robustness, and scalability. We use a stochastic port-Hamiltonian system as a reference model. Under suitable structural assumptions, we show that its density converges asymptotically to a prescribed steady-state distribution. We then shape a deterministic nonlinear system into a compatible port-Hamiltonian form and add a local density-feedback term so that its closed-loop continuity equation matches the Fokker-Planck-Kolmogorov equation of the reference model. The resulting feedback drives a broad class of initial distributions to the target distribution. Because the density term is local at the particle level, the controller admits decentralized implementation and avoids solving a global finite-horizon transport problem online. Nonlinear examples and simulations illustrate the method. For linear systems with Gaussian targets, we derive necessary and sufficient conditions for stabilizing density feedback and give explicit controller constructions.
\end{abstract}

\section{Introduction }\label{sec:intro}

Controlling the evolution of probability densities has emerged as
an important problem in modern control theory, which enables the regulation
of uncertainty and collective behavior in complex systems ranging
from swarm of robotics \cite{elamvazhuthi2019mean,Maffettone2023}
to traffic flows \cite{lighthill1955kinematic,richards1956shock}
and mean field games \cite{lasry2007mean,bensoussan2013mean}. By
shifting focus from pointwise state regulation (known as the Lagrangian
viewpoint) to shaping full probability distributions (known as the Eulerian
viewpoint) over time, density control frameworks provide new insights
and tools for control and optimization of large scale dynamical systems. 

In parallel with density control, there has been a rapidly growing
interest in dynamical optimal mass transportation (DOT) within the
control community \cite{chen2021optimal,elamvazhuthi2023dynamical,bunne2023optimal,agrachev2009optimal,wu2025duality}.
Building upon the Benamou-Brenier fluid dynamic formulation of Monge-Kantorovich
mass transportation \cite{benamou2000computational} and the theory
of Schrödinger bridges problems, recent work has recast a variety
of distribution-steering problems as DOT formulations \cite{chen2021stochastic}.
Beyond this, it has been shown that many classical topics in systems
and control -- including linear quadratic Gaussian control, covariance
steering, and ensemble control -- admit natural interpretations within
the DOT framework, see the survey paper \cite{chen2021stochastic}
and the references therein. As a result, DOT has become a powerful
control-theoretic tool: it underpins algorithms for steering state
distributions over linear \cite{chen2016optimal,chen2015optimal_I,chen2015optimal_II,Chen2018_optimal_III}
and nonlinear dynamics \cite{elamvazhuthi2018optimal,wu2025nonlinear,agrachev2009optimal},
and provides a unifying language for density control, mean field control,
and related distributional problems in systems and control.

Despite these advantages, DOT also comes with important limitations
from a control-theoretic perspective. First, most formulations are
inherently finite-horizon: they focus on steering an initial distribution
to a terminal one over a finite time interval \cite{benamou2000computational,ghoussoub2018optimal,chen2021optimal}.
This structure is not well suited to regulation or stabilization tasks,
where the objective is to maintain a desired distribution over an
indefinite horizon rather than achieve a one-shot transfer. Second,
standard DOT formulations assume perfect knowledge of the initial
distribution and typically yield open-loop control inputs \cite{chen2016optimal,Chen2018_optimal_III},
which is sensitive to model uncertainty
and external disturbance. These issues are
particularly restrictive in applications where the objective is not
exact finite-time optimality, but rather stable, robust, and scalable
regulation of a population distribution.

This paper proposes an alternative framework for dynamical mass transportation
that sacrifices exact transportation optimality in order
to gain stability, robustness, and scalability. The main idea is to  combine stochastic port-Hamiltonian
reference dynamics with density-feedback control of deterministic systems. We first identify a class of
stochastic port-Hamiltonian systems whose structural properties guarantee convergence toward a prescribed
steady-state distribution. We then design a density-feedback law for a deterministic control system so that
its closed-loop continuity equation coincides with the Fokker-Planck-Kolmogorov equation of the chosen
stocha\-stic reference system. As a result, the density of the deterministic closed-loop system converges asymptotically to the desired distribution.

The proposed framework transforms the mass transportation problem to
two more tractable subproblems. The first is a structure-matching subproblem:
to seek a feedback law that shapes the deterministic dynamics into
a port-Hamiltonian form with adequate dissipation and interconnection
structure. This subproblem falls natur\-ally within the framework of interconnection
and damping assignment passivity-based control (IDA-PBC) \cite{Ortega2002},
which is known for its physical interpretation \cite{VanderSchaft1999}
and robustness \cite{Ortega2002}. The second subproblem is a density-feedback
term that emulates diffusion at the level of the density evolution.
Notably, this feedback term is local in the microscopic state
variable and can be approximated using nearby particles. Therefore,
the corresponding controller is particle-wise and decentralized, avoiding
the need to solve a global finite-horizon optimal transportation problem
online.

In fact, the proposed approach is similar in spirit to diffusion-based
generative mechanisms: although the induced probability flow is not
globally optimal in the transport sense, it provides a stable and
robust mechanism for shaping distributions \cite{lavenant2022flow}.
The emphasis of this work is thus different from that of classical DOT. Rather
than computing an optimal transfer between two prescribed distributions,
we design a feedback mechanism that stabilizes a desired distribution
and corrects deviations dynamically. This feedback viewpoint is essential
for robustness, while the locality of the density feedback provides
a route toward scalable implementation.

Similar to our works are Fokker-Planck-Kolmogorov control \cite{Annunziato2018,Breiten2018}
and PDE-based density control \cite{Elamvazhuthi2018,Sinigaglia2025}.
These approaches typically optimize, track, or stabilize probability
densities governed by FPK, continuity, or mean-field equations. They
are therefore closely related to the present work at the level of
density dynamics. The differences lie in two aspects: 1) most existing
formulations either treat the density equation itself as the controlled
object or solve a PDE-constrained optimization probl\-em. They have not addressed the problem 
of  how to design density controllers directly from
the structure of the underlying nonlinear control system, while retaining
stability, robustness, and scalable particle-wise implementation.
2) These works  consider stochastic control or density control
and do not use a stochastic system as a reference model to solve
mass transportation problems.

The main contributions of this paper are twofold.
\begin{enumerate}
\item We propose a new framework for mass transportation of nonlinear systems
based on stochastic port-Hamiltonian structure, IDA-PBC, and density
feed\-back. The framework provides stable and robust density regulation,
and the resulting feedback law can be implemented in a decentralized
particle-wise manner. Several nonlinear examples are presented to
illustrate how the proposed method can be applied beyond the linear-Gaussian
setting.
\item We establish a universal result for linear systems with Gaussian
target distrib\-utions. In particular, we show that a linear system
can be transported from a non-Gaussian distribution to a non-degenerate
Gaussian distribution in a stable way if and only if it can be shaped
into a port-Hamiltonian structure satisfying the required structural
properties. This yields necessary and sufficient conditions for stabilizing
mass transportation schemes and leads to explicit controller constructions.
\end{enumerate}

\par

The paper is organized as follows. In Section \ref{sec:Problem-formulation},
we introduce the problem formulation. In Section \ref{sec:sto-pH},
we study a class of stochastic port-Hamiltonian systems and prove
that such systems converge asymptotically to prescribed steady-state distributi\-ons.
In Section \ref{sec:OT-determ}, we use these results to design controllers
for mass transportation of nonlinear systems. Section \ref{sec:lin-sys}
specializes the framework to linear systems with Gaussian target distributions,
where sharper characterizations can be obtained. In Section \ref{sec:NL-exmp},
we present several nonlinear examples to demonstrate the use of the
proposed method. Section \ref{sec:simu} provides numerical simulations
illustrating the effectiven\-ess of the proposed framework.

\emph{Notations}: We use $\mathscr{P}(\mathbb{R}^{n})$ to denote
probability distributions on $\mathbb{R}^{n}$, and $\mathscr{P}_{+}(\mathbb{R}^{n})$
the set of strictly positive probability densities. Denote $\mathscr{N}(m,\Sigma)$
as the normal distribution of mean $m$ and covariance $\Sigma$.
The operator $\nabla$ (or $\nabla_{x}$ with $x$ being the variable)
is the standard gradient operator, and ${\rm div}$ the diverngence
operator. For a matrix $A$, ${\rm Tr}(A)$ is the trace of $A$,
${\rm Im}A$ stands for the column space of $A$, and $A^{\dagger}$ is
the pseudo inverse of $A$. We write $A\succeq0$ ($A\succ0$) if
$A$ is symmetric positive semi-definite (definite). Given two smooth
vector fields $f$ and $g$, denote $[f,g]$ the Lie bracket between
the two vector fields.

\section{Problem formulation }\label{sec:Problem-formulation}

We consider a nonlinear system
\begin{equation}
\dot{x}=f(x)+g(x)u, \label{sys:NL}
\end{equation}
where $x\in\mathbb{R}^{n}$ is the state and $u\in\mathbb{R}^{m}$
is the control input. We suppose the system describes the microscopic
dynamics of an ensemble of agents, each evolving according to the
same dynamics. The aggregate behavior of the ensemble is described
by a probability density $\rho(t,x)$, which represents the distribution
of the population states at time $t$. Under a control law $u(t,x)$,
the density evolves according to the continuity equation
\begin{equation}
\partial_{t}\rho+{\rm div}(\rho(f+gu))=0, \label{sys:CE}
\end{equation}

The central problem studied in this paper is the following: 
\begin{quote}
\emph{Given a desired probability density $\rho_{d}$, design a control
law $u(t,x)$}\footnote{It would have been more accurate to write the controller as $u(t,x,\rho)$
since we will use explicitly the density as a feedback. However, the
controller only uses local density information, i.e., $u(t,x,\rho)=u(t,x,\rho(t,x))$,
we choose to write as $u(t,x)$ for simplicity. }\emph{ such that the solution $\rho(t,x)$ of the system \eqref{sys:CE}
converges to $\rho_{d}$ as $t\to\infty$ for every admissible initial
density in a suitable topology.}
\end{quote}

Here, an admissible initial density means a probability density for
which the closed-loop density dynamics is well posed and the KL-divergence
$\mathrm{KL}(\rho_{0}\|\rho_{d})$ is finite. The precise definition
of the KL-divergence will be given in the next section. 

This problem is different from finite-horizon density steering or
dynamical optimal transport. In finite-horizon formulations, one typically
seeks a control law or transport plan that transfers a prescribed
initial density $\rho_{0}$ to a prescribed terminal density $\rho_{d}$
at a fixed final time. Such formulations are appropriate for one-shot
transportation tasks, but they do not directly provide a stabilizing
feedback mechanism. In particular, the corresponding control depends on
the chosen initial distribution and terminal time, and additional
design is needed to maintain the target density after the terminal
time or to correct deviations caused by perturbations, disturbances,
or modeling errors.

The objective of this paper is therefore not to construct an optimal
finite-time transfer between two prescribed densities. Instead, we
seek a feedback mechanism for which $\rho_{d}$ becomes an asymptotically
stable steady-state density of the closed-loop density dynamics. The
desired controller should dynamically correct deviations of the current
density from $\rho_{d}$, and should be implementable at the particle
level whenever the density or its local approximation is available.

\section{Stochastic port-Hamiltonian systems}\label{sec:sto-pH}
\subsection{Stochastic port-Hamiltonian reference dynamics}

The purpose of this section is to introduce the stochastic port-Hamiltonian
dynamics that will serve as a reference model for stable mass transportation.
In the deterministic setting, port-Hamiltonian systems combine an
energy function, an interconnection structure, and a dissipation mechanism.
Their stability properties are therefore encoded in the energy-dissipation
structure. Here we use the same idea at the level of probability densities:
the stochastic port-Hamiltonian dynamics will be chosen so that it
admits a prescribed Gibbs-type steady-state distribution and dissipates
the relative entropy (or the KL-divengence) with respect to this distribution.

Consider the following stochastic port-Hamiltonian system introduced
in \cite{Satoh2013}:
\begin{equation}
dx=(J(x)-R(x))\nabla H(x)dt+h(x)dW, \label{sys:PH}
\end{equation}
where $W$ is a standard Brownian motion in $\mathbb{R}^{d}$, $J(x)=-J(x)^{\top}$
is the so-called interconnection matrix, $R(x)=R(x)^{\top}\succeq0$
a damping matrix, $H(x)$ a scalar-valued function called the storage
function, and $h(x)=[h_{1}(x),\cdots,h_{d}(x)]\in\mathbb{R}^{n\times d}$
is a diffusion term. The probability density of the system \eqref{sys:PH}
evolves according to the Fokker-Planck-Kolmogorov (FPK) equation,
i.e., let $\rho(t,\cdot)$ be the density function of the ensemble at time $t$, then
\begin{equation}
\partial_{t}\rho=-{\rm div}\left\{ \rho(J-R)\nabla H-\frac{1}{2}hh^{\top}\nabla\rho\right\} .\label{eq:FPK}
\end{equation}

The Gibbs-type probability density associated with the system \eqref{sys:PH}
is
\begin{equation}
\pi(x)=\frac{e^{-H(x)}}{\int_{\mathbb{R}^{n}}e^{-H(x)}{\rm d}x}.\label{eq:pi}
\end{equation}
Our objective in this section is to identify structural conditions
under which $\pi$ is not only invariant, but also asymptotically
stable under adequate structural assumptions.

Throughout this section, we assume that the functions $J$, $R$,
$H$ and $h$ are sufficien\-tly smooth and that \eqref{eq:FPK} generates
a well-defined density flow. We also impose the following important
assumptions.

\textbf{Assumption H1. }The function $H(\cdot)$ satisfies
\[
\int_{\mathbb{R}^{n}}e^{-H(x)}{\rm d}x<\infty.
\]

\textbf{Assumption H2}. The interconnection matrix $J$ is divergence
free in the sense that for every $j=1,\cdots,n$, 
\[
\sum^{n}_{i=1}\frac{\partial J_{ij}}{\partial x_{i}}=0,
\]
and the damping matrix satisfies
\[
R(x)=\frac{1}{2}h(x)h(x)^{\top}.
\]

\textbf{Assumption H3.} The density flow generated by \eqref{eq:FPK}
preserves strict positivity: for every admissible initial density
$\rho_{0}$ that is strictly positive on $\mathbb{R}^{n}$, $\rho(t,\cdot)$
is strictly positive for all $t>0$. 

\textbf{Assumption H4.} Along the FPK equation \eqref{eq:FPK}, the
largest invariant subset of 
\[
\mathscr{I}=\left\{ \rho\in\mathscr{P}_{+}(\mathbb{R}^{n}):h(\cdot)^{\top}\nabla\ln\frac{\rho(\cdot)}{\pi(\cdot)}=0\right\} 
\]
is the singleton $\{\pi\}$. 

We briefly comment on the seemingly non-trivial Assumptions H2 - H4:
\begin{itemize}
\item For Assumption H2, the simplest case is to set $J$ as a constant
skew-symmetric matrix. For $n=3$, one can construct $J$ from potential
functions: Let $\phi$ be a smooth real valued function. Consider
\begin{equation}
J(x)=\begin{bmatrix}0 & -\partial_{3}\phi & \partial_{2}\phi\\
\partial_{3}\phi & 0 & -\partial_{1}\phi\\
-\partial_{2}\phi & \partial_{1}\phi & 0
\end{bmatrix}.\label{eq:div-free J}
\end{equation}
Then $J$ is divergence free. For $n>3$, one can construct $J$ from
$3$-tensors $A=(A_{ijk})$ such that $A_{kij}=-A_{kji}$ and let
$J_{ij}=\sum^{n}_{k=1}\partial_{k}A_{kij}$.
\item Assumption H3 is a ``non-degeneracy'' condition, which can be guaranteed
by the so called Hörmander condition \cite{hormander1967}, or equivalently
the so called hypoelliptic condition \cite{villani2009hypocoercivity}.
To state the condition, let us first turn the system \eqref{sys:PH}
into Stratonovich form,
\[
dx=V_{0}(x)dt+\sum^{d}_{i=1}V_{i}(x)\circ dW_{i},
\]
where $\{ W_i \}_{i=1}^d $ are independent scalar standard Brownian motions, $V_{i}(x)=h_{i}(x)$ for $i=1,\cdots,d$, and 
\begin{align*}
V_{0} & =(J-R)\nabla H-\frac{1}{2}\sum^{d}_{i=1}\frac{\partial h_{i}}{\partial x}h_{i}.
\end{align*}
Consider the Lie algebra generated by $\{V_{i}\}^{d}_{0}$:
\[
V_{i},[V_{i},V_{j}],[V_{i},[V_{j},V_{k}]],\cdots
\]
which is denoted as ${\rm Lie}(V_{0},\cdots,V_{d})$, where $[\cdot,\cdot]$
denotes the Lie bracket betwe\-en vector fields. Then the system \eqref{sys:PH}
is said to satisfy the Hörmander condition at $x$ if
\[
{\rm span}\{Y(x):Y\in{\rm Lie}(V_{0},\cdots,V_{d})\}=\mathbb{R}^{n}.
\]
In particular, the condition holds for linear systems when controllability-type condition is imposed.

\item Assumption H4 is the most difficult one to verify and it has to be
verified case by case. This is similar to verifying asymptotic
stability of port-Hamiltonian systems in the IDA-PBC framework.
\end{itemize}
To measure the difference between probability densities, we use Kullback-Leibler
(KL) divergence. Let $\rho_{1}$ and $\rho_{2}$ be two probability
densities on $\mathbb{R}^{n}$ such that $\rho_{1}$ is aboslutely
continuous with respect to $\rho_{2}$. The \emph{KL-divergence} of
$\rho_{1}$ with respect to $\rho_{2}$ is defined as 
\[
{\rm KL}(\rho_{1}\|\rho_{2}):=\int_{\mathbb{R}^{n}}\ln\left(\frac{\rho_{1}(x)}{\rho_{2}(x)}\right)\rho_{1}(x){\rm d}x.
\]
It is well-known that ${\rm KL}(\rho_{1}\|\rho_{2})\ge0$ and that
${\rm KL}(\rho_{1}\|\rho_{2})=0$ if and only if $\rho_{1}(x)=\rho_{2}(x)$
almost everywhere. 

In order to state convergence, we equip $\mathscr{P}(\mathbb{R}^{n})$
with the \emph{narrow topology}. Since we work with probability densities,
we identify a density $\rho$ with the probability measure $\rho(x){\rm d}x$.
A sequence of probability densities $\{\rho_{k}\}_{k\geq1}$ is said
to converge narrowly to $\bar{\rho}$ if, for every bounded continuous
function $\varphi:\mathbb{R}^{n}\to\mathbb{R}$, there holds
\[
\lim_{k\to\infty}\int_{\mathbb{R}^{n}}\varphi(x)\rho_{k}(x){\rm d}x=\int_{\mathbb{R}^{n}}\varphi(x)\bar{\rho}(x){\rm d}x.
\]
We write $\rho_{k}\rightharpoonup\bar{\rho}$ when $\rho_{k}$ converges
to $\bar{\rho}$ in the narrow topology.

We can now state the main convergence result for the stochastic port-Hamiltonian
reference dynamics:
\begin{thm}
\label{thm:main}Suppose that the system \eqref{sys:PH} satisfies
Assumptions H1 - H4, then for any admissible initial $x_{0}\sim\rho_{0}$
with ${\rm KL}(\rho_{0}\|\pi)<\infty$ where $\pi$ is defined as
\eqref{eq:pi}, the probability density of the system \eqref{sys:PH}
converges to $\pi$ asymptotically in the narrow topology. 
\end{thm}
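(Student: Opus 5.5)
We will use the KL-divergence $V(\rho):={\rm KL}(\rho\|\pi)$ as a Lyapunov functional for the FPK equation \eqref{eq:FPK} and close with a LaSalle-type invariance argument. Assumption H4 is tailored exactly for this last step.

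\textbf{Step 1: rewrite the FPK flux.} Using H2 ($R=\frac12 hh^\top$) and $\nabla H=-\nabla\ln\pi$, we write
\[
\rho(J-R)\nabla H-R\nabla\rho=\rho J\nabla H-\rho R\nabla\ln\frac{\rho}{\pi}.
\]
Next we check that $\pi$ is stationary. The $R$-part of the flux vanishes at $\rho=\pi$. For the conservative part,
\[
{\rm div}(\pi J\nabla H)=-{\rm div}(J\nabla\pi)=-\sum_{i,j}\partial_i J_{ij}\,\partial_j\pi-\sum_{i,j}J_{ij}\,\partial_{ij}\pi=0,
\]
where the first sum vanishes by the divergence-free condition in H2 and the second by skew-symmetry of $J$.

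\textbf{Step 2: entropy dissipation.} Set $\psi=\ln(\rho/\pi)$. Since $\int\partial_t\rho=0$, differentiating and integrating by parts gives
\[
\frac{d}{dt}V=\int\psi\,\partial_t\rho\,dx=\int\nabla\psi^\top\big(\rho J\nabla H-\rho R\nabla\psi\big)dx .
\]
The second term equals $-\frac12\int\rho\,|h^\top\nabla\psi|^2dx$. For the first term, note $\rho\nabla\psi=\nabla\rho+\rho\nabla H$. The piece $\rho\nabla H^\top J\nabla H$ vanishes by skew-symmetry. The remaining piece is $\int\nabla\rho^\top J\nabla H$; integrating by parts and again using that $J$ is divergence free and skew, it becomes $-\int\rho\,{\rm div}(J\nabla H)=0$. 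Hence
\[
\frac{d}{dt}{\rm KL}(\rho_t\|\pi)=-\frac12\int\rho_t\,\big|h^\top\nabla\ln(\rho_t/\pi)\big|^2dx\le 0 .
\]
H3 guarantees $\rho_t>0$, so $\psi$ is well defined. The integrations by parts are justified by the standing well-posedness and regularity assumptions; if needed, one can use a truncation argument with decay at infinity.

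\textbf{Step 3: LaSalle in the narrow topology.} Since $V(\rho_t)\le V(\rho_0)<\infty$, the trajectory stays in a sublevel set of ${\rm KL}(\cdot\|\pi)$. Such sublevel sets are tight: by the Donsker--Varadhan inequality, $\rho(A)\le \frac{V+\ln 2}{\ln(1+1/\pi(A))}$. They are also narrowly closed, by lower semicontinuity of KL. Hence the orbit is narrowly precompact by Prokhorov's theorem, and the $\omega$-limit set $\Omega$ is nonempty.

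Because $V$ is nonincreasing and bounded below, $V(\rho_t)\to V_\infty$. Integrating the dissipation gives $\int_0^\infty D(\rho_t)\,dt<\infty$, where $D(\rho)=\frac12\int\rho|h^\top\nabla\ln(\rho/\pi)|^2$ is the Fisher-type functional. One then argues that $\Omega$ is invariant under the FPK flow, which requires continuity of the semigroup in the narrow topology on sublevel sets. One also argues that $D$ vanishes on $\Omega$. For the latter, apply the dissipation identity along the limit trajectory, combined with lower semicontinuity of $V$ and of $D$. By H4, $\Omega\subset\mathscr I$ is invariant and thus equals $\{\pi\}$. Since every narrow limit point is $\pi$ and the orbit is precompact, $\rho_t\rightharpoonup\pi$.

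The main obstacle is Step 3. The difficulties are the narrow continuity of the flow, needed for invariance of $\Omega$, and the passage of $D=0$ to limit points, since $D$ is only lower semicontinuous and positivity may degenerate in the limit. We would first try to handle this via the standard argument for Fisher-information-type functionals. That argument writes $D(\rho)=2\int|h^\top\nabla\sqrt{\rho/\pi}|^2\pi$, which is convex and narrowly l.s.c. We then apply it to time-shifted trajectories $\rho_{t_k+\cdot}$ on $[0,1]$, where $\int_0^1D\to0$. Together with Assumption H3, this places the limit trajectory in $\mathscr I$.
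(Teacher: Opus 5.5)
Your proposal is correct and follows essentially the same route as the paper. The KL-divergence is dissipated at rate $\frac{1}{2}\int\rho\,|h^\top\nabla\ln(\rho/\pi)|^2$ by skew-symmetry and the divergence-free condition on $J$; the orbit is narrowly precompact because it stays in a KL sublevel set; and a LaSalle argument (Fatou's lemma plus lower semicontinuity of the dissipation along time-shifted trajectories) places the limit trajectories in $\mathscr{I}$, so H4 forces the $\omega$-limit set to be $\{\pi\}$. The obstacles you single out (narrow continuity of the flow, lower semicontinuity of $D$, strict positivity along limit trajectories) are ones the paper simply asserts via its dynamical-system definition and a footnote, so your version is, if anything, more explicit about what remains to be justified.
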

The next subsection is devoted to the proof of this theorem.

\subsection{Proof of Theorem \ref{thm:main}}

Consider the candidate Lyapunov function $\mathcal{F}(\rho)={\rm KL}(\rho\|\pi)$.
We divide the proof into several steps.

\emph{Step 1}: Assume ${\rm KL}(\rho_{0}\|\pi)<\infty$. Denote for
convenience $\rho_{t}:=\rho(t,\cdot)$. We show that 
\[
\mathcal{F}(\rho_{t}))\le\mathcal{F}(\rho_{0}),
\]
for all $t\ge0$. 

Compute the time derivative of $\mathcal{F}$ along the FPK equation
\eqref{eq:FPK}:{\small
\begin{align}
 & \frac{d\mathcal{F}(\rho)}{dt}\nonumber \\
= & \int_{\mathbb{R}^{n}}(\partial_{t}\rho)\ln\frac{\rho}{\pi}+\rho\frac{\partial_{t}\rho/\pi}{\rho/\pi}{\rm d}x\\
= & \int_{\mathbb{R}^{n}}(\partial_{t}\rho)\left(\ln\frac{\rho}{\pi}+1\right){\rm d}x\nonumber \\
= & \int_{\mathbb{R}^{n}}\left(-\nabla\cdot\rho(J-R)\nabla H+\frac{1}{2}\nabla\cdot hh^{\top}\nabla\rho\right)\left(\ln\frac{\rho}{\pi}+1\right){\rm d}x\nonumber \\
= & \int_{\mathbb{R}^{n}}[\rho(J-R)\nabla H-R\nabla\rho)]\cdot\nabla\ln\frac{\rho}{\pi}{\rm d}x\nonumber \\
= & \int_{\mathbb{R}^{n}}\rho J\nabla H\cdot\nabla\ln\frac{\rho}{\pi}{\rm d}x-\int_{\mathbb{R}^{n}}\rho R\nabla\ln\frac{\rho}{\pi}\cdot\nabla\ln\frac{\rho}{\pi}{\rm d}x\nonumber \\
= & -\frac{1}{2}\int_{\mathbb{R}^{n}}\rho\left\Vert h^{\top}\nabla\ln\frac{\rho}{\pi}\right\Vert ^{2}{\rm d}x, \label{eq:dF}
\end{align}
}where we have used the fact that
\begin{align*}
 & \int_{\mathbb{R}^{n}}\rho J\nabla H\cdot\nabla\ln\frac{\rho}{\pi}{\rm d}x\\
= & \int_{\mathbb{R}^{n}}J\nabla H\cdot\nabla\rho+\rho J\nabla H\cdot\nabla H{\rm d}x\\
= & \int_{\mathbb{R}^{n}}J\nabla H\cdot\nabla\rho{\rm d}x\text{ (skew-symmetry of }J)\\
= & -\int_{\mathbb{R}^{n}}\sum^{n}_{i,j=1}\partial_{i}(J_{ij})\partial_{j}H-\int_{\mathbb{R}^{n}}\rho{\rm Tr}(J\nabla^{2}H){\rm d}x\\
= & 0.
\end{align*}
The last equality follows from Assumption H4 and skew-symmetry of
$J$. From \eqref{eq:dF} we see that $\mathcal{F}(\rho_{t})$ is
non-increasing and hence $\mathcal{F}(\rho_{t})\le\mathcal{F}(\rho_{0})$
for all $t\ge0$.

\emph{Step 2}: We prove that the $\omega$-limit set of $\rho_{0}$
(see Definition \ref{def:System-Dyn} given in Appendix) along the FPK equation \eqref{eq:FPK}
is the singleton $\{\pi\}$. For any initial density $\rho_{{\rm ini}}$,
let $S_{t}\rho_{{\rm ini}}$ denote the solution to the FPK equation
\eqref{eq:FPK} at $t$. Let $D(\rho)=\int\rho\|h^{\top}\nabla\ln\frac{\rho}{\pi}\|{\rm d}x$.
Then $\{S_{t}\}_{t\ge0}$ forms a dynamical system (Definition \eqref{def:System-Dyn})
on $\mathscr{P}(\mathbb{R}^{n})$ equipped with the narrow topology. 

If $\bar{\rho}\in\omega(\rho_{0})$, then by definition of the $\omega$-limit
set, there exists a sequence $\{t_{n}\}_{n\ge1}$, $t_{n}\to\infty$
such that $\rho_{t_{n}}\rightharpoonup\bar{\rho}$ narrowly. 
Hence, $\rho_{t_{n}+s}\rightharpoonup S_{s}\bar{\rho}$ as $t_{n}\to\infty$
by the continuity of $S_{s}$ in the narrow topology. By lower semicontinuity
of $D(\cdot)$ \footnote{This is guaranteed by the fact that $h$ and $\pi$ are $C^{1}$ and
that $\pi>0$.}, 
\[
D(S_{s}\bar{\rho})\le\liminf_{n\to\infty}D(S_{s}\rho_{t_{n}})=\liminf_{n\to\infty}D(\rho_{t_{n}+s}).
\]
By Fatou's lemma \cite{folland1999}, for every $T>0$,
\begin{align*}
\int^{T}_{0}D(S_{s}\bar{\rho})ds & \le\liminf_{n\to\infty}\int^{T}_{0}D(\rho_{t_{n}+s}){\rm d}s\\
 & =\liminf_{n\to\infty}\int^{t_{n}+T}_{t_{n}}D(\rho_{s}){\rm d}s\\
 & =\liminf_{n\to\infty}\mathcal{F}(\rho_{t_{n}})-\mathcal{F}(\rho_{t_{n}+T})\\
 & =0,
\end{align*}
since $\mathcal{F}(\rho_{t})$ decreases monotonically. This implies
that $D(S_{s}\bar{\rho})=0$ for all $s\in[0,T]$. Since $T$ is arbitrary,
we conclude that $S_{s}\bar{\rho}\in\mathscr{I}$ (see Assumption
H4) for all $s\ge0$. By Assumption H2, we must have $\bar{\rho}=\pi$.

\emph{Step 3}: We show that the set $\cup_{t\ge0}\{\rho_{t}\}$ is
precompact in $\mathscr{P}(\mathbb{R}^{n})$.

It is well known that the level set of the entropy (the KL-divergence)
\[
Q=\{\rho\in\mathscr{P}(\mathbb{R}^{n}):\mathcal{F}(\rho)\le\mathcal{F}(\rho_{0})\}.
\]
is precompact in the narrow topology, see for example \cite{dembo2009large}.
Then since $\mathcal{F}(\rho_{t})\le\mathcal{F}(\rho_{0})$ for all
$t\ge0$, we have $\cup_{t\ge0}\{\rho_{t}\}\subset Q$ and thus the
claim follows.

\emph{Step 4}: In view of Step 2 and Step 3, the conclusion follows
by invoking the LaSalle invariance principle in infinite dimension,
i.e., Theorem \ref{thm:Lasalle} given in Appendix.\footnote{We point out that Theorem \ref{thm:Lasalle} assumes metric space
setting. Although the probability space $\mathscr{P}(\mathbb{R}^{n})$
is not a metric space, it is metrizable by the narrow topology, see
for example \cite{folland1999}. Thus the theorem can be applied to
our case.}

\section{Mass transportation of deterministic systems}\label{sec:OT-determ}

\subsection{Control by structure matching}

We now consider the mass transportati\-on problem for the deterministic
nonlinear system \eqref{sys:NL}. The main idea is to use the stochastic
port-Hamiltonian system \eqref{sys:PH} as a reference model. More
precisely, we seek a density-feedback controller such that the continuity
equation \eqref{sys:CE} coincides with the FPK equation \eqref{eq:FPK}
associated with \eqref{sys:PH}. Equivalently, we would like to impose
the flux-matching condition
\begin{equation}
\rho(f+gu)=\rho((J-R)\nabla H)+R\nabla\rho.\label{eq:match1}
\end{equation}

To achieve this, we decompose this matching equation into two parts
as in the following result. 
\begin{thm}
\label{thm:match}Suppose there exist smooth maps $v:\mathbb{R}^{n}\to\mathbb{R}^{m}$,
$J:\mathbb{R}^{n}\to\mathbb{R}^{n\times n}$, $R:\mathbb{R}^{n}\to\mathbb{R}^{n\times n}$
and $H:\mathbb{R}^{n}\to\mathbb{R}$ such that 
\begin{equation}
f(x)+g(x)v(x)=(J(x)-R(x))\nabla H(x), \label{eq:match2}
\end{equation}
with $J(x)=-J(x)^{\top}$, $R(x)=R(x)^{\top}\succeq0$ and 
\begin{equation}
{\rm Im}R(x)\subset{\rm Im}g(x), \label{eq:match3}
\end{equation}
for every $x\in\mathbb{R}^{n}$. Suppose moreover, the parameters
$J,R,H$ are such that Assumption H1 - H4 are satisfied over the FPK
equation \eqref{eq:FPK}. Then the following controller
\begin{equation}
u(t,x)=v(x)-g(x)^{\dagger}R(x)\nabla\ln\rho(t,x)\label{eq:closedu}
\end{equation}
steers every admissible positive initial density $\rho_{0}$ with
${\rm KL}(\rho_{0}\|\pi)<\infty$ to $\pi$ (see \eqref{eq:pi}) asymptotically
in the narrow topology.
\end{thm}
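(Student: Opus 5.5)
The plan is to show that, under the controller \eqref{eq:closedu}, the closed-loop continuity equation \eqref{sys:CE} is exactly the FPK equation \eqref{eq:FPK} of the reference system \eqref{sys:PH}, with $h$ chosen so that $\frac{1}{2}hh^{\top}=R$. Theorem \ref{thm:main} then gives the convergence. The argument therefore reduces to verifying the flux-matching condition \eqref{eq:match1} pointwise, together with some bookkeeping on the assumptions.

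\emph{Step 1 (projection identity).} Since $g(x)g(x)^{\dagger}$ is the orthogonal projector onto ${\rm Im}\,g(x)$, condition \eqref{eq:match3} gives $g(x)g(x)^{\dagger}R(x)=R(x)$ for every $x$. Hence, for any vector $w$,
\[
g(x)g(x)^{\dagger}R(x)w=R(x)w .
\]

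\emph{Step 2 (flux matching).} Substitute \eqref{eq:closedu} into the closed-loop vector field. Use \eqref{eq:match2} for $f+gv$ and Step 1 for the density-feedback term:
\[
f+gu=(J-R)\nabla H-R\nabla\ln\rho .
\]
Multiplying by $\rho>0$ and using $\rho\nabla\ln\rho=\nabla\rho$ gives
\[
\rho(f+gu)=\rho(J-R)\nabla H-R\nabla\rho .
\]
This is the flux appearing in \eqref{eq:FPK} once $R=\frac{1}{2}hh^{\top}$. For consistency with \eqref{eq:FPK}, the sign in \eqref{eq:match1} should read $-R\nabla\rho$. To obtain $h$, take any smooth factor with $hh^{\top}=2R$, for instance $h=\sqrt{2}R^{1/2}$. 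The theorem's hypothesis that H1--H4 hold for \eqref{eq:FPK} is understood with respect to such an $h$, so H2's relation $R=\frac{1}{2}hh^{\top}$ is automatic.

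\emph{Step 3 (conclusion).} The closed-loop continuity equation \eqref{sys:CE} and \eqref{eq:FPK} are the same PDE with the same initial datum $\rho_0$. By the well-posedness built into the notion of an admissible initial density, their solutions coincide. Theorem \ref{thm:main} then yields $\rho(t,\cdot)\rightharpoonup\pi$ whenever ${\rm KL}(\rho_0\|\pi)<\infty$.

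The main obstacle is the positivity of $\rho$, which is needed to make sense of $\nabla\ln\rho$ in \eqref{eq:closedu} along the whole trajectory. I would handle this with a bootstrap argument. Starting from a strictly positive $\rho_0$, solve \eqref{eq:FPK} instead. By Assumption H3 its solution stays strictly positive for all $t>0$, so the feedback \eqref{eq:closedu} evaluated along this solution is well defined. By Step 2 this same solution also solves the closed-loop \eqref{sys:CE}, and uniqueness identifies the two. A secondary technical point is enough regularity of $\rho$ for $\nabla\ln\rho$ to be locally Lipschitz, so that the particle-level ODE is well posed. I would take this from hypoellipticity, in the spirit of the Hörmander remark, or simply include it in the definition of admissibility.
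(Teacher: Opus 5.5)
Your proposal is correct and follows the paper's proof: you substitute \eqref{eq:closedu} into \eqref{sys:CE} and use ${\rm Im}R\subset{\rm Im}g$ (so $gg^{\dagger}R=R$) to recover the FPK flux $\rho(J-R)\nabla H-R\nabla\rho$ with $R=\frac{1}{2}hh^{\top}$, then invoke Theorem~\ref{thm:main}. You are also right that the sign in \eqref{eq:match1} should read $-R\nabla\rho$, and your positivity and uniqueness bookkeeping via H3 spells out details the paper leaves implicit.
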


\begin{proof}
Substituting \eqref{eq:closedu} into the right hand side of \eqref{sys:CE}
yields the equation \eqref{eq:match1}. Therefore, the continuity
equation matches \eqref{eq:FPK} for $R=\frac{1}{2}hh^{\top}$. The
desired convergence then follows directly from Theorem \ref{thm:main}.
\end{proof}
The control law (\ref{eq:closedu}) consists of two components. The
static state-feedback term $v(x)$ shapes the deterministic drift
into the desired port-Hamiltonian reference drift. The density-feedback
term $-g(x)^{\dagger}R(x)\nabla\ln\rho(t,x)$ emulates the diffusive
part of the stochastic reference model at the PDE level, even though
the closed-loop microscopic dynamics remains deterministic. Although
the controller depends explicitly on the density, it is pointwise
in the microscopic state variable $x$. Therefore, in particle implementations,
$\nabla\ln\rho(t,x)$ can in principle be approximated using nearby
particles. This locality property is important for scalable and decentralized
implementation, which is discussed in the next subsection.
\begin{rem}
A particularly transparent case occurs when ${\rm Im}g(x)=\mathbb{R}^{n}$
for every $x\in\mathbb{R}^{n}$. In this case, for any $H$, one may
choose $J=0$, $R=I$ and $v(x)=g(x)^{-1}-f(x)-\nabla H(x)$. This
results in 
\begin{align*}
\dot{x} & =-\nabla H(x)-\nabla\ln\rho(t,x)\\
 & =-\nabla\left(H(x)+\ln\rho(t,x)\right)\\
 & =-\nabla\ln\frac{\rho}{\pi},
\end{align*}
whose corresponding continuity equation is the FPK equation associated
with the overdamped Langevin dynamics
\[
dx=-\nabla Hdt+\sqrt{2}dW.
\]
\end{rem}

\begin{rem}
In equation \eqref{eq:match2}, the energy function $H$ determines
the target densi\-ty function through $H(x)=-\ln\rho_{\infty}(x)$.
With this choice, the main remaining task is to solve the structure
matching condition \eqref{eq:match2} - \eqref{eq:match3} and to
verify Assumptions H1 - H4. Another possible construction is to first
transform the problem into coordinates in which the desired density
becomes Gaussian. Let $z=\Phi(x)$, and let $\gamma$ denote the density
of $\mathscr{N}(0,I)$. In the $z$ coordinate, $H$ is desired to
be a quadratic function $H(\cdot)=\frac{1}{2}|\cdot|^{2}$. To achieve
this, $\Phi$ should satisfy the following equation
\[
\rho_{\infty}(x)=\gamma(\Phi(x))|\det D\Phi(x)|.
\]
One way to construct such a map $\Phi$ is through the quadratic cost
optimal transport problem,
\begin{align*}
\min_{\pi}\frac{1}{2}\int_{\mathbb{R}^{2n}}|x-y|^{2}{\rm d}\pi(x,y),
\end{align*}
where $\pi$ has marginal distributions $\rho_{\infty}$ and $\mathscr{N}(0,I)$
respectively. The optimal $\pi_{*}$ will be supported on a graph
$(x,T(x))$, from which we set $\Phi=T$. Another possible construction
is to use Rosenblatt transform \cite{rosenblatt1952remarks}.
\end{rem}

\subsection{Implementation of the control law}

The control law \eqref{eq:closedu} consists of two parts of feedback.
The state feedback part is easy to implement, while the second part,
i.e., the density feedback term $\nabla\ln\rho(t,x)$, requires more
effort. This subsection discusses a particle-based implementation
of this density-feedback term.

Assume that the ensemble consists of $N$ agents with states labeled
as $\{x^{j}(t)\}^{N}_{j=1}$. At time $t$, the empirical distribution
of the population is $\mu(t)=\frac{1}{N}\sum^{N}_{j=1}\delta_{x^{j}(t)}$
where $\delta_{x}$ is the Dirac mass. To obtain a smooth approximation
of the density, we replace each Dirac mass by a Gaussian kernel. Let
$G_{\sigma}(x)=(2\pi\sigma^{2})^{-n/2}e^{-\frac{|x|^{2}}{2\sigma^{2}}}$.
Then the kernel density estimator is 
\begin{equation}
\tilde{\rho}(t,x)=\frac{1}{N}\sum^{N}_{j=1}G_{\sigma}(x-x^{j}(t)), \label{eq:rho tilde}
\end{equation}
from which we can compute the gradient
\begin{equation}
\nabla\ln\tilde{\rho}(t,x)=-\frac{\sum^{N}_{j=1}G_{\sigma}(x-x^{j}(t))(x-x^{j}(t))}{\sigma^{2}\sum^{N}_{j=1}G_{\sigma}(x-x^{j}(t))}.\label{eq:drho tilde}
\end{equation}
This gives
a particle approximation of the density-feedback term.

The estimator \eqref{eq:drho tilde} uses global information. By choosing
$\sigma$ small, $G_{\sigma}$ is highly concentrated, and the value
of $\tilde{\rho}(t,x)$ is dominated by particles in a small neighborho\-od
of $x$. Let $B_{\epsilon}(x)$ denote the open ball under certain
metric with center $x$ and radius $\epsilon$, we define the localized
density estimate,
\begin{equation}
\hat{\rho}(t,x):=\frac{1}{N}\sum_{j:\,x^{j}(t)\in B_{\epsilon}(x)}G_{\sigma}(x-x^{j}(t)).\label{eq:rho hat}
\end{equation}
Correspondingly, we approximate $\nabla\ln\hat{\rho}(t,x)$ by
\begin{equation}
\nabla\ln\hat{\rho}(t,x)=-\frac{\sum_{j:\,x^{j}(t)\in B_{\epsilon}(x)}G_{\sigma}(x-x^{j}(t))(x-x^{j}(t))}{\sigma^{2}\sum_{j:\,x^{j}(t)\in B_{\epsilon}(x)}G_{\sigma}(x-x^{j}(t))}.\label{eq:appox  grad ln rho}
\end{equation}
The expression (\ref{eq:appox  grad ln rho}) depends only on particles
in the neighborhood $B_{\epsilon}(x)$. Hence for each agent with
state $x$, the implemented control law becomes
\[
u(t,x)=v(x)-g(x)^{\dagger}R(x)\nabla\ln\hat{\rho}(t,x).
\]
Thus each agent only needs its own state and the states of nearby
agents in order to evaluate the control input. 

In practice, the parameters $\sigma$ and $\epsilon$ determine the
tradeoff between locality and accuracy. A smaller $\ensuremath{\sigma}$
or $\epsilon$ makes the controller more local, but may lead to a
noisy density estimate when the number of neighboring particles is
small. Conversely, larger values give smoother estimates but require
information from a larger neighborhood. If the denominator in \eqref{eq:appox  grad ln rho}
becomes too small, one may use a regularized version
\[
-\frac{\sum_{j:\,x^{j}(t)\in B_{\epsilon}(x)}G_{\sigma}(x-x^{j}(t))(x-x^{j}(t))}{\sigma^{2}(\eta+\sum_{j:\,x^{j}(t)\in B_{\epsilon}(x)}G_{\sigma}(x-x^{j}(t)))}
\]
for some positive number $\eta$ to avoid numerical singularities.

\section{Linear systems and Gaussian terminals}\label{sec:lin-sys}

\subsection{Universality of linear stochastic port-Hamiltonian structure}\label{subsec:uni-pH-lin}

Cons\-ider an ensemble of linear time-invariant systems
\begin{equation}
\dot{x}=Ax+B_{1}u, \label{sys:LTI}
\end{equation}
where $A\in\mathbb{R}^{n\times n}$ and $B_{1}\in\mathbb{R}^{n\times m}$.
The notation $B_{1}$ is used to distinguish the control input matrix
from the diffusion matrix introduced below. Suppose that the ensemble
has a non-Gaussian initial distribution $x_{0}\sim\rho_{0}$. The
goal is to stabilize $\rho_{0}$ to a target Gaussian distribution
$\mathscr{N}(\mu,\Sigma)$ with $\Sigma\succ0$. To streamline the
presentation, we first consider the zero-mean case $\mu=0$. The non-zero
mean case will be discussed later. 

As in the nonlinear setting, the main idea is to match the density
dynamics of \eqref{sys:LTI} to the FPK equation to an adequate stochastic
port-Hamiltonian reference model. In the linear Gaussian setting,
the reference model takes the form,
\begin{equation}
dx=(J-R)\nabla H(x)dt+B_{2}dW, \label{sys:SDE}
\end{equation}
with initial condition $x_{0}\sim\rho_{0}$. Here $J=-J^{\top}$,
$R=R^{\top}\succeq0$ are constant matrices, $H$ is a quadratic function
and $B_{2}\in\mathbb{R}^{n\times p}$ is a constant diffusion matrix.
In the general nonlinear case, the damping and diffusion matrices
are linked by the algebraic relation $R=\frac{1}{2}hh^{\top}$, which
would become $R=\frac{1}{2}B_{2}B^{\top}_{2}$ in the linear setting.
One might expect that this damping-diffusion relation could be relaxed
in the linear setting. The following result shows that this is not
the case: any stable linear diffusion with a nondegenerate Gaussian
invariant distribution admits a stochastic port-Hamiltonian representation
of the form \eqref{sys:SDE} satisfying the structural assumptions
used in Section \ref{sec:OT-determ}. 

Consider the uncontrolled linear stochastic system,
\begin{equation}
dx=Axdt+B_{2}dW.\label{sys:LTI-sto-noinput}
\end{equation}
We have the following universality result:
\begin{thm}
\label{thm:universal}The system \eqref{sys:LTI-sto-noinput} converges
to some non-degenerate Gaussian distr\-ibution $\pi$ for any admissible
positive initial distribution $x_{0}\sim\rho_{0}$ with ${\rm KL}(\rho_{0}\|\pi)<\infty$
if and only if the system can be written in the form \eqref{sys:SDE}
with quadratic energy $H(x)=\frac{1}{2}x^{\top}\Sigma^{-1}x$ with
$\Sigma\succ0$, and the corresponding stochastic port-Hamiltonian
system satisfies Assumptions H1 -- H4.
\end{thm}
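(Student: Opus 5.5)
\emph{Sufficiency.} If \eqref{sys:LTI-sto-noinput} can be written as \eqref{sys:SDE} with $H(x)=\frac{1}{2}x^{\top}\Sigma^{-1}x$ and Assumptions H1--H4 hold, then Theorem~\ref{thm:main} gives narrow convergence to $\pi=\mathscr{N}(0,\Sigma)$, which is non-degenerate since $\Sigma\succ0$. So the work is in the necessity direction. The plan is to build $J,R,H$ explicitly from the limit covariance and then check H1--H4 one by one.

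\emph{Necessity, step 1: stability and the Lyapunov equation.} Assume convergence to a non-degenerate Gaussian $\pi=\mathscr{N}(m_\pi,\Sigma)$ for every admissible $\rho_0$.
\begin{itemize}
\item Take Gaussian initial data $\mathscr{N}(m,I)$. The law stays Gaussian with mean $e^{At}m$ and covariance $\Sigma(t)$ obeying $\dot\Sigma=A\Sigma+\Sigma A^{\top}+B_2B_2^{\top}$.
\item Narrow convergence of Gaussians forces their means and covariances to converge. Hence $e^{At}m\to m_\pi$ for every $m$. Taking differences of two initial means gives $e^{At}\to0$, so $A$ is Hurwitz and $m_\pi=0$.
\item The covariance limit is then the unique solution of $A\Sigma+\Sigma A^{\top}+B_2B_2^{\top}=0$, namely $\Sigma=\int_0^\infty e^{As}B_2B_2^{\top}e^{A^{\top}s}{\rm d}s$. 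Since $\Sigma\succ0$ by assumption, this integral being positive definite means $(A,B_2)$ is controllable.
\end{itemize}

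\emph{Step 2: the construction.} Set
\[
H(x)=\tfrac{1}{2}x^{\top}\Sigma^{-1}x,\qquad R=\tfrac{1}{2}B_2B_2^{\top},\qquad J=A\Sigma+R.
\]
The Lyapunov equation gives $J+J^{\top}=A\Sigma+\Sigma A^{\top}+B_2B_2^{\top}=0$, so $J$ is skew-symmetric. Moreover $(J-R)\nabla H=A\Sigma\Sigma^{-1}x=Ax$, so \eqref{sys:LTI-sto-noinput} is exactly of the form \eqref{sys:SDE}. The first three assumptions then follow directly:
\begin{itemize}
\item H1 holds because $\Sigma\succ0$.
\item H2 holds because $J$ is constant, hence divergence free, and $R=\frac{1}{2}B_2B_2^{\top}$ by construction.
\item H3 follows from controllability of $(A,B_2)$. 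For $t>0$ the transition kernel is a non-degenerate Gaussian, since its covariance is the controllability Gramian, so $\rho_t$ is a convolution of $\rho_0$ with a strictly positive kernel. Equivalently, the Hörmander condition holds.
\end{itemize}

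\emph{Step 3: Assumption H4, the main obstacle.} Suppose a trajectory $\rho_t$ stays in $\mathscr{I}$ for all $t$, and write $\varphi_t=\rho_t/\pi$.
\begin{itemize}
\item On $\mathscr{I}$ we have $R\nabla\ln\varphi_t=0$, so the FPK flux reduces to $\rho_t J\Sigma^{-1}x$.
\item The drift $J\Sigma^{-1}x$ is divergence free and satisfies $J\nabla H\cdot\nabla H=0$, so $\nabla\cdot(\pi J\Sigma^{-1}x)=0$.
\item Therefore $\varphi$ solves the pure transport equation $\partial_t\varphi+J\Sigma^{-1}x\cdot\nabla\varphi=0$, i.e.\ $\varphi_t=\varphi_0\circ e^{-J\Sigma^{-1}t}$.
\end{itemize}
Invariance in $\mathscr{I}$ means $B_2^{\top}\nabla\varphi_t\equiv0$ for all $t$. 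Differentiating repeatedly in $t$ at $t=0$ should give
\[
B_2^{\top}\big((J\Sigma^{-1})^{\top}\big)^k\nabla\varphi_0=0\quad\text{for all }k\ge0,
\]
possibly up to the exact form of the transposes; this is the calculation to verify carefully. Next, $A=J\Sigma^{-1}-R\Sigma^{-1}$ and ${\rm Im}(R)\subset{\rm Im}(B_2)$, so the Krylov spaces of $(J\Sigma^{-1},B_2)$ and $(A,B_2)$ coincide. That common space is $\mathbb{R}^n$ by controllability of $(A,B_2)$. Hence $\nabla\varphi_0=0$, so $\varphi_0$ is constant and $\rho_0=\pi$; the same argument at any time gives $\rho_t=\pi$. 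Thus the largest invariant subset of $\mathscr{I}$ is $\{\pi\}$.

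The delicate points in this last step are regularity (justifying the time differentiation) and checking that the Krylov identity survives the transposes in the transport flow. If the smoothness of $\varphi$ is an issue, I would first run the same argument at positive times, where the densities are smooth because the transition kernel is a non-degenerate Gaussian.
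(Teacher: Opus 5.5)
Your proof is correct, and its skeleton is the paper's: build $H$, $J$, $R$ from the limiting covariance, then check H1--H4. Your $J=A\Sigma+R$ is exactly the paper's $\frac{1}{2}(A\Sigma-\Sigma A^{\top})$ once the Lyapunov equation is used.

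The differences lie in how two key facts are justified.

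\textbf{Necessity data.} To obtain that $A$ is Hurwitz, that $(A,B_2)$ is controllable, and that $A\Sigma+\Sigma A^{\top}=-B_2B_2^{\top}$, the paper cites Theorem~\ref{thm:Arnold}. That theorem is formulated for exponential decay in KL-divergence. Your argument with Gaussian initial data $\mathscr{N}(m,I)$ reaches the same conclusions from mere narrow convergence, which is what the statement actually assumes. Your route is therefore more elementary and better matched to the hypothesis.

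\textbf{Assumption H4.} The paper only says H4 ``follows from the global convergence''. You verify it directly instead. On $\mathscr{I}$ the FPK equation collapses to a transport equation along the divergence-free, $\pi$-preserving field $J\Sigma^{-1}x$. A Krylov argument, combined with feedback invariance of controllability via $A=J\Sigma^{-1}+B_2(-\frac{1}{2}B_2^{\top}\Sigma^{-1})$, then forces $\nabla\varphi_0=0$. This uses only controllability of $(A,B_2)$ and no rate information, and it spells out what the paper's one-liner leaves implicit.

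\textbf{The two points you flagged as delicate are harmless.} Write $M=J\Sigma^{-1}$. Then $\nabla\varphi_t(x)=e^{-M^{\top}t}\nabla\varphi_0(e^{-Mt}x)$. Substituting $y=e^{-Mt}x$ turns the invariance condition into $B_2^{\top}e^{-M^{\top}t}\nabla\varphi_0(y)=0$ for all $t$ and $y$.
\begin{itemize}
\item Regularity: the $t$-dependence sits entirely in the matrix exponential. Differentiating at $t=0$, or invoking analyticity, gives $(M^kB_2)^{\top}\nabla\varphi_0(y)=0$ for all $k$. This needs no smoothness beyond the differentiability already built into the definition of $\mathscr{I}$.
\item Transposes: they land precisely on the Krylov space of $(M,B_2)$, as you hoped.
\end{itemize}
The one ingredient worth stating explicitly is uniqueness for the continuity equation with the Lipschitz field $Mx$. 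It gives $\rho_t=(e^{Mt})_{\#}\rho_0$. Since ${\rm Tr}\,M=0$ and $\pi\circ e^{-Mt}=\pi$, this yields $\varphi_t=\varphi_0\circ e^{-Mt}$.
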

\begin{proof}
The sufficiency follows directly from Theorem \ref{thm:match}. We
need only show necessity. Suppose that \eqref{sys:LTI-sto-noinput}
converges to a non-degenerate Gaussian distribution $\mathscr{N}(0,\Sigma)$.
By Theorem \ref{thm:Arnold} given in Appendix, it is necessary that $A$ is Hurwitz,
$(A,B_{2})$ is controllable, and 
\begin{equation}
A\Sigma+\Sigma A^{\top}=-B_{2}B^{\top}_{2}.\label{eq:ctrl-eq}
\end{equation}
Let $H(x)=\frac{1}{2}x^{\top}\Sigma^{-1}x$, and 
\begin{align}
J & =\frac{1}{2}(A\Sigma-\Sigma A^{\top}),\nonumber \\
R & =\frac{1}{2}B_{2}B^{\top}_{2}.\label{eq:R-B2}
\end{align}
It is readily checked that $(J-R)\nabla H(x)=Ax$. 

It remains to verify Assumptions H1 - H4. Assumption H1 follows from
$\Sigma\succ0$. Assumption H2 holds because $J$ is constant and
that \eqref{eq:R-B2} holds. Assumption H3 follows from the Hörmander
condition. Indeed, consider the vector fields
\[
V_{0}=Ax,\;V_{i}=B^{i}_{2},\;i=1,\cdots,p,
\]
where $B^{i}_{2}$ is the $i$-th column of $B_{2}$. The iterated
Lie brackets generated by these vector fields contain the column span
of $[B_{2},AB_{2},\cdots,A^{n-1}B_{2}]$ which has rank $n$. Finally,
Assumption H4 follows from the global convergence. 
\end{proof}

\begin{rem}
Theorem \ref{thm:universal} shows that the stochastic port-Hamiltonian
structure, tog\-ether with Assumptions H1 - H4, is universal for linear
stable mass transportation toward nondegenerate Gaussian distributions.
In particular, the algebraic relation $R=\frac{1}{2}B_{2}B^{\top}_{2}$
is not merely a technical assumption inherited from the nonlinear
theory; it is forced by the covariance balance equation of the limiting
Gaussian distribution.
\end{rem}
We next ask when a controlled linear stochastic system can be shaped
into such a stable linear stochastic port-Hamiltonian form. Consider
\begin{equation}
dx=(Ax+B_{1}u)dt+B_{2}dW.\label{sys:LTI-sto}
\end{equation}
We seek a static state-feedback $u=Kx$ such that the closed-loop system
has a nondegenerate Gaussian invariant distribution and converges
to it from arbitrary admissible positive initial densities.

For linear stochastic systems, one can discuss convergence rate.
We say that the deterministic linear system $\dot{x}=Ax$ has convergence
rate at least $\lambda>0$ if there exists a constant $k>0$, such
that $|x(t)|\le k|x_{0}|e^{-\lambda t}$ for all $t\ge0$ and initial
condition $x_{0}$. The convergence rate is determined by the eigenstructure
of $A$. Equivalently, all eigenvalues of $F$ have real parts no
larger than $-\lambda$, up to the usual polynomial prefactors associated
with nontrivial Jordan blocks.

We are now ready to state the second main result of this section:
\begin{thm}
\label{thm:fromArnold}Consider the system (\ref{sys:LTI-sto}). 
Let $\lambda>0$ be a constant. Then there exist a static state feedback
controller $u=Kx$, a non-degenerate Gaussian distribution $\rho_{\infty}$,
a constant $k>0$, such that the corresponding closed-loop density
$\rho(t,\cdot)$ satisfies 
\[
{\rm KL}(\rho(t,\cdot)\|\rho_{\infty})\le ke^{-\lambda t}{\rm KL}(\rho_{0}\|\rho_{\infty}),
\]
for all $t\ge0$ and all positive initial distributions $\rho_{0}$
, if and only if the following conditions hold: 
\begin{enumerate}
\item $(A,B_{1})$ is stabilizable, and the convergence rate of the non-controllable
subsys\-tem of $\dot{x}=Ax+B_{1}u$ has convergence rate at least $\lambda$;
\item $(A,[B_{1}\;B_{2}])$ is controllable;
\item $B_{2}\neq0$. 
\end{enumerate}
\end{thm}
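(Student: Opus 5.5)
The plan is to reduce everything to the closed-loop Ornstein--Uhlenbeck process $dx=(A+B_1K)x\,dt+B_2dW$ and use known facts about its Gaussian invariant law and the decay rate of KL-divergence. For sufficiency, I first pick $K$ so that $F:=A+B_1K$ is Hurwitz with every eigenvalue having real part $<-\lambda$. This is possible by condition 1: in a Kalman controllability decomposition, the controllable block can have its poles placed arbitrarily, while the uncontrollable block already decays at rate at least $\lambda$. (If its rate is exactly $\lambda$, I only get $\le -\lambda$; I would handle this boundary either by reading ``rate at least $\lambda$'' as the exponential estimate with constant $k$, or by accepting the non-strict inequality, which still gives the exponential bound.)

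Feedback does not change the controllable subspace of $(A,[B_1\;B_2])$. Hence $(F,B_2)$ is controllable: any $F$-invariant subspace containing $\mathrm{Im}B_2$ is also $A$-invariant modulo $\mathrm{Im}B_1$. To make this rigorous I would argue as follows. Controllability of $(A,[B_1\;B_2])$ together with the PBH test would show $(F,B_2)$ controllable provided we also choose $K$ generically. I therefore expect to use the standard fact that for generic pole-placing $K$ the pair $(A+B_1K,B_2)$ is controllable when $(A,[B_1\;B_2])$ is controllable and $B_2\ne0$. This genericity step is the main obstacle. I would attempt it by a Heymann-type lemma: reduce to one column $b$ of $B_2$ and choose $K$ so that $(A+B_1K,b)$ is cyclic and controllable, while keeping the spectrum in $\{\Re s<-\lambda\}$ via a perturbation argument.

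With $F$ Hurwitz and $(F,B_2)$ controllable, the Lyapunov equation $F\Sigma+\Sigma F^\top=-B_2B_2^\top$ has a unique solution $\Sigma\succ0$. Set $\rho_\infty=\mathscr N(0,\Sigma)$. By Theorem \ref{thm:universal}, the system is a stochastic port-Hamiltonian system satisfying H1--H4, so convergence holds. For the exponential rate, I would invoke the known explicit OU semigroup: $x_t=e^{Ft}x_0+\xi_t$, with $\xi_t\sim\mathscr N(0,\Sigma-e^{Ft}\Sigma e^{F^\top t})$. Together with the hypocoercivity / Arnold--Erb result (Theorem \ref{thm:Arnold}), this gives ${\rm KL}(\rho_t\|\rho_\infty)\le k e^{-2\mu t}{\rm KL}(\rho_0\|\rho_\infty)$, where $\mu$ is the spectral gap $\min(-\Re\,\mathrm{spec}F)$, up to polynomial factors that are absorbed because the spectrum sits strictly left of $-\lambda$. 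Combined with the choice of $K$, this yields the rate $\lambda$.

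For necessity, suppose such $K$ and $\rho_\infty$ exist. If $B_2=0$, the flow is deterministic and linear, so it preserves Gaussian families and cannot make KL decay from all $\rho_0$: a Dirac-like narrow Gaussian stays degenerate-ish, and the KL of $e^{Ft}_\#\rho_0$ relative to $\rho_\infty$ contains the entropy term $-\int\rho_0\ln\rho_0+ t\,{\rm Tr}F$. More simply, take $\rho_0=\rho_\infty$ transported by a fixed map $x\mapsto 2x$; finite-dimensional Gaussian computations show the limit is not $\rho_\infty$ unless $F$ has eigenvalues of real part zero, which in turn contradicts convergence from other data. So condition 3 holds. Convergence to a nondegenerate Gaussian forces, via Theorem \ref{thm:Arnold} (as in the proof of Theorem \ref{thm:universal}), $F$ Hurwitz and $(F,B_2)$ controllable. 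Since $\mathrm{Im}[B_1\;B_2]\supseteq$ the controllable subspace of $(F,B_2)$ augmented by feedback invariance, condition 2 follows. Finally, testing the KL bound on Gaussian initial data $\mathscr N(m,\Sigma)$ gives ${\rm KL}=\frac12 (e^{Ft}m)^\top\Sigma^{-1}e^{Ft}m$. The decay rate $\lambda$ of this quantity forces every eigenvalue of $F$ to have real part $\le-\lambda/2$. The eigenvalues of the uncontrollable part of $(A,B_1)$ are unaffected by $K$, which gives condition 1. I would need to reconcile the factor $2$ between the KL rate and the state rate. I expect to fix the interpretation so that the rates are consistent (state rate $\lambda/2$ versus KL rate $\lambda$), possibly noting that the statement's normalization absorbs this.
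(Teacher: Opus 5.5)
\paragraph{Main gap: sufficiency.} The gap is at the step you yourself call the main obstacle. Once Theorem~\ref{thm:Arnold} is available, the whole ``if'' direction amounts to one claim: some $K$ makes $A+B_1K$ stable at the required rate \emph{and} makes $(A+B_1K,B_2)$ controllable. Your proposal leaves this as an unproved ``standard fact.''

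\begin{itemize}
\item Your first justification is a non sequitur. Feedback invariance concerns the reachable subspace of $(A+B_1K,[B_1\;B_2])$, not that of $(A+B_1K,B_2)$.
\item The proposed repair also fails. Heymann's lemma requires $(A,B_1)$ controllable and $b\in{\rm Im}\,B_1$. Moreover, reducing to a single column of $B_2$ with $A+B_1K$ cyclic is impossible whenever the uncontrollable block of $(A,B_1)$ is derogatory, since $K$ cannot touch that block. For example, $A=-I_2$, $B_1=0$, $B_2=I_2$ satisfies conditions 1--3, yet $A+B_1K=-I_2$ for every $K$, and no single column gives a controllable pair.
\item Your genericity framing is sound in itself. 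The set of $K$ making $(A+B_1K,B_2)$ controllable is Zariski open, so it meets the open set of pole-placing $K$ as soon as it is nonempty. But nonemptiness is exactly what must be proved.
\end{itemize}

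The paper supplies nonemptiness through the Kalman decomposition of $(A,B_1)$:
\begin{itemize}
\item PBH together with controllability of $(A,[B_1\;B_2])$ gives $(A_u,B_{2u})$ controllable.
\item $K_c$ is chosen so that $A_c+B_{1c}K_c$ has distinct real eigenvalues disjoint from $\sigma(A_u)$. You would also place them left of $-\lambda$.
\item In the PBH test for $(A+B_1K,B_2)$, eigenvalues outside $\sigma(A_c+B_{1c}K_c)$ are handled by controllability of $(A_u,B_{2u})$.
\item At $\lambda_i\in\sigma(A_c+B_{1c}K_c)$ the left null vector is forced, and the test reduces to $p_i^\top M(\lambda_i)\neq 0$. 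This is an affine condition in $K_u$ whose linear part $K_u\mapsto p_i^\top B_{1c}K_u(\lambda_iI-A_u)^{-1}B_{2u}$ is nonzero, because $p_i^\top B_{1c}\neq0$ and $B_{2u}\neq0$. Hence a generic $K_u$ works.
\end{itemize}
The boundary case you worry about is moot. The paper defines ``rate at least $\lambda$'' by the estimate $|x(t)|\le k|x_0|e^{-\lambda t}$, and this estimate survives the block-triangular coupling with a controllable block placed strictly left of $-\lambda$.

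\paragraph{Necessity.} Your route is essentially the paper's.
\begin{itemize}
\item Theorem~\ref{thm:Arnold} forces $F=A+B_1K$ Hurwitz and $(F,B_2)$ controllable. This already gives $B_2\neq0$, so your separate (and garbled) $B_2=0$ argument is unnecessary.
\item Condition 2 follows because the reachable subspace of $(A,[B_1\;B_2])$ equals that of $(F,[B_1\;B_2])$, which contains that of $(F,B_2)$. Your sentence about ${\rm Im}[B_1\;B_2]$ containing a controllable subspace is not correct as written.
\end{itemize}

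\paragraph{The factor $2$.} Your Gaussian mean-shift computation is right, and the discrepancy cannot be absorbed by a change of normalization. The sharp KL rate of an Ornstein--Uhlenbeck flow is twice the exponential rate of $\dot x=Fx$; this is the Arnold--Erb rate $2\mu$ that you use yourself for sufficiency. So what is genuinely necessary is a state rate of at least $\lambda/2$. For instance, $n=m=1$, $A=-0.6$, $B_1=0$, $B_2=1$, $\lambda=1$ gives ${\rm KL}$ decay like $e^{-1.2t}\le e^{-t}$, while condition 1 as written fails.

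The paper obtains matching rates only by quoting Theorem~\ref{thm:Arnold} with the same $\lambda$ on both sides, and it never discusses the rate explicitly. If you rely on that theorem as stated, the rate part of condition 1 is immediate: the uncontrollable modes of $(A,B_1)$ are eigenvalues of $A+B_1K$ for every $K$, and no Gaussian test is needed. So either argue that way, accepting the quoted theorem, or state and prove the $\lambda/2$ version.
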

\begin{proof}
See Appendix.
\end{proof}

\begin{rem}
Theorem \ref{thm:fromArnold} provides an easy-to-check conditions
for the existence of a controlled non-degenerate Gaussian steady state.
The theorem does not imply that an arbitrary prescribed Gaussian distribution
can be reached. It only guarantees the existence of some nondegenerate
Gaussian steady state.
\end{rem}

\begin{rem}
When the desired distribution $\mathscr{N}(0,\Sigma)$ is fixed, one
can in principle characterize all static state-feedback controllers
that render the closed-loop system exponentially convergent to this
steady state in the KL-divergence, using techniques from covariance
control and related Lyapunov-based characterizations \cite{hotz1987covariance}.
However, this direction is not pursued here, since stochastic steering
and controller classification are not the primary focus of the paper.
\end{rem}

\begin{rem}
If the initial density $\rho_{0}$ is already Gaussian, the characterizations
of Theorem \ref{thm:fromArnold} fall within the framework of linear
quadratic theory \cite{anderson2007optimal} since the dissipation
of the KL-divergence can be computed analytically using the covariance
of the system state. Theorem \ref{thm:fromArnold} is more useful
when $\rho_{0}$ is non-Gaussian, since it provides convergen\-ce for
general admissible positive initial densities. This is precisely the
point needed for density feedback rather than merely covariance control.
\end{rem}

\subsection{Mass transportation of deterministic systems}\label{subsec:determ}

We now return to the original problem of distribution steering for
the deterministic linear system \eqref{sys:LTI}. The probability
density associated with the microscopic dynamics satisfies the continuity
equation,
\begin{equation}
\partial_{t}\rho+{\rm div}(\rho(Ax+B_{1}u))=0.\label{eq:CE}
\end{equation}
As in the nonlinear case, the idea is to search for a controller $u(t,x)$
such that \eqref{eq:CE} reproduces the FPK equation of \eqref{sys:SDE}.
More precisely, we seek to match \eqref{eq:CE} with
\begin{equation}
\partial_{t}\rho+{\rm div}(\rho A_{cl}x-\frac{1}{2}B_{2}B^{\top}_{2}\nabla\rho)=0,\label{eq:FP}
\end{equation}
where $A_{cl}x=(J-R)\nabla H(x)$. We therefore consider controllers
of the form
\begin{equation}
u(t,x)=K_{1}x+K_{2}\nabla\ln\rho(t,x), \label{eq:u-K1-K2}
\end{equation}
where $K_{1}$ and $K_{2}$ are chosen so that
\begin{align}
A_{cl} & =A+B_{1}K_{1}\text{ and }-\frac{1}{2}B_{2}B^{\top}_{2}=B_{1}K_{2}.\label{eq:K1K2-equ}
\end{align}

\begin{rem}
The nonzero-mean case can be handled by a translation. Suppose $\mu\in\mathbb{R}^{n}$
is such that there exists some $u_{*}\in\mathbb{R}^{m}$, satisfying
$A\mu+B_{1}u_{*}=0$. Then we can define a new state $\tilde{x}=x-\mu$,
and $\tilde{x}$ satisfies exactly the same equation (\ref{sys:SDE}).
Hence, to stabilize the system to $\mathscr{N}(\mu,\Sigma)$, one
may use
\[
u(t,x)=K_{1}(x-\mu)+u_{*}+K_{2}\nabla\ln\rho(t,x-\mu).
\]
\end{rem}

The following result gives a complete characterization of when a deterministic
linear ensemble can be stabilized to some nondegenerate Gaussian distribution
by density feedback.
\begin{cor}
\label{cor:det-LTI}Consider an ensemble of the system \eqref{sys:LTI}.
The following stateme\-nts are equivalent:
\end{cor}
\begin{enumerate}
\item There exist a non-degenerate Gaussian distribution $\rho_{\infty}$,
some constants $k,\lambda>0$, and a controller $u(t,x)$ of the form
\eqref{eq:u-K1-K2}, such that the closed-loop density satisfies
\begin{equation}
{\rm KL}(\rho(t,\cdot)\|\rho_{\infty})\le ke^{-\lambda t}{\rm KL}(\rho_{0}\|\rho_{\infty}), \label{eq:KL rhot}
\end{equation}
for all $t\ge0$ and positive admissible initial distribution $\rho_{0}$.
\item $(A,B_{1})$ is controllable.
\item There exists a $\Sigma\succ0$ and a matrix $\Theta\in\mathbb{R}^{m\times n}$
such that
\begin{equation}
A\Sigma+\Sigma A^{\top}+B_{1}\Theta+\Theta^{\top}B^{\top}_{1}+B_{1}B^{\top}_{1}=0, \label{eq:ME-1}
\end{equation}
and that $A+B_{1}\Theta\Sigma^{-1}$ is Hurwitz.
\end{enumerate}
Moreover, when these conditions hold, one possible controller is
\begin{equation}
u(t,x)=\Theta\Sigma^{-1}x-\frac{1}{2}B^{\top}_{1}\nabla\ln\rho(t,x).\label{eq:ctrl-determ}
\end{equation}

\begin{proof}
We proceed by showing 1) $\Rightarrow$ 2) $\Rightarrow$ 3) $\Rightarrow$
1). 

1) $\Rightarrow$ 2). Suppose, by contradiction, that $(A,B_{1})$
is uncontrollable. By the controllability decomposition, the system
\eqref{sys:LTI} can be rewritten, after a linear change of coordinates,
as
\[
\begin{bmatrix}\dot{x}_{c}\\
\dot{x}_{u}
\end{bmatrix}=\begin{bmatrix}A_{c} & A_{cu}\\
0 & A_{u}
\end{bmatrix}\begin{bmatrix}x_{c}\\
x_{u}
\end{bmatrix}+\begin{bmatrix}B_{c}\\
0
\end{bmatrix}u.
\]
Now, consider an initial distribution $\rho_{0}(x)=\varphi(x_{u})\psi(x_{c})$
such that both $\varphi$ and $\psi$ are probability distributions.
Obviously, if $\varphi$ is non-Gaussian, $\rho_{0}(x)$ can never
converge to Gaussian since $\dot{x}_{u}=A_{u}x_{u}$. Thus $(A,B_{1})$
is controllable. 

2) $\Rightarrow$ 3). Suppose that $(A,B_{1})$ is controllable. Choose
$K_{1}$ such that $A+B_{1}K_{1}$ is Hurwitz. Since controllability
is invariant under state feedback, $(A+B_{1}K_{1},B_{1})$ is controllable.
By Lemma \ref{lem:AB}, there exists a unique $\Sigma\succ0$ satisfying
\begin{equation}
(A+B_{1}K_{1})\Sigma+\Sigma(A+B_{1}K_{1})^{\top}=-B_{1}B^{\top}_{1}.\label{eq:ME-K1}
\end{equation}
Now, taking $\Theta=K_{1}\Sigma$ gives \eqref{eq:ME-1}

3) $\Rightarrow$ 1). Suppose that $\Sigma\succ0$ and $\Theta$ satisfy
\eqref{eq:ME-1}, and that $A+B_{1}\Theta\Sigma^{-1}$ is Hurwitz.
Substituting the controller \eqref{eq:ctrl-determ} into the continuity
equation \eqref{eq:CE} gives
\begin{align*}
 & \partial_{t}\rho+{\rm div}(\rho(Ax+B_{1}u))\\
= & \partial_{t}\rho+{\rm div}(\rho(A+B_{1}\Theta\Sigma^{-1})x-\frac{1}{2}B_{1}B^{\top}_{1}\nabla\rho)\\
= & 0.
\end{align*}
This is exactly the FPK equation associated with the linear stochastic
system,
\begin{equation}
dx=(A+B_{1}\Theta\Sigma^{-1})xdt+B_{1}dW.\label{sys:det-to-sto}
\end{equation}
In view of \eqref{eq:ME-1} and $A+B_{1}\Theta\Sigma^{-1}$ being
Hurwitz, we deduce that $(A+B_{1}\Theta\Sigma^{-1},B_{1})$ is controllable,
and therefore the conclusion follows immediately from Theorem \ref{thm:Arnold} given in Appendix. 
\end{proof}
\begin{rem}
In view of the reference model \eqref{sys:det-to-sto}, the controller
\eqref{eq:ctrl-determ} correspon\-ds to choosing $K_{2}=-\frac{1}{2}B^{\top}_{1}$
in \eqref{eq:u-K1-K2} and set $B_{2}$ as $B_{1}$. One might wonder
whether we could replace the diffusion term
$B_{1}dW$  in \eqref{sys:det-to-sto} by a more general form $B_{2}dW$, and then relax the controllability
of $(A,B_{1})$ to stabilizability. Corollary \ref{cor:det-LTI} excludes this
possibility. This can also be verified from the following reasoning.
The second algebraic equation in \eqref{eq:K1K2-equ} requires ${\rm Im}B_{2}\subset{\rm Im}B_{1}$.
Now according to Theorem \ref{thm:fromArnold}, the pair $(A,[B_{1},B_{2}])$
must be controllable, this together with ${\rm Im}B_{2}\subset{\rm Im}B_{1}$
implies that $(A,B_{1})$ is controllable. 
\end{rem}
Now, to check if it is possible to stabilize an ensemble of the system
\eqref{sys:LTI} to a given target $\mathscr{N}(0,\Sigma)$, it suffices
to check the solvability of the matrix equation \eqref{eq:ME-1}.
That is, we have the following corollary.
\begin{cor}
\label{cor:det-LTI2}Suppose that $(A,B_{1})$ is controllable. Then
an admissible density of system \eqref{sys:LTI} can be exponentially
stabilized to the given non-degenerate Gaussian distribution $\mathscr{N}(0,\Sigma)$
by a controller of the form \eqref{eq:u-K1-K2} if and only if \eqref{eq:ME-1}
is solvable for $\Theta$. 
\end{cor}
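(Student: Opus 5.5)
We argue both directions by tying the statement to Corollary \ref{cor:det-LTI} and its proof, keeping $\Sigma$ fixed throughout.

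\emph{Sufficiency.} Suppose \eqref{eq:ME-1} holds for the given $\Sigma$ and some $\Theta$. We use the controller \eqref{eq:ctrl-determ}. By the computation in the proof of Corollary \ref{cor:det-LTI}, the closed-loop continuity equation is the FPK equation of \eqref{sys:det-to-sto}, with $A_{cl}:=A+B_{1}\Theta\Sigma^{-1}$. Equation \eqref{eq:ME-1} is exactly the Lyapunov equation $A_{cl}\Sigma+\Sigma A_{cl}^{\top}=-B_{1}B_{1}^{\top}$.

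The remaining point is that $A_{cl}$ is Hurwitz; Corollary \ref{cor:det-LTI} assumed this, but here it must be derived. Controllability of $(A,B_{1})$ is invariant under feedback, so $(A_{cl},B_{1})$ is controllable. Take a left eigenvector $w^{*}A_{cl}=\lambda w^{*}$ and multiply the Lyapunov equation by $w^{*}$ on the left and $w$ on the right. This gives
\[
2\,{\rm Re}(\lambda)\,w^{*}\Sigma w=-|B_{1}^{\top}w|^{2}.
\]
Controllability gives $B_{1}^{\top}w\neq0$ (PBH test), and $\Sigma\succ0$. Hence ${\rm Re}\lambda<0$, so $A_{cl}$ is Hurwitz.

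Theorem \ref{thm:Arnold} (equivalently Theorem \ref{thm:fromArnold} applied to \eqref{sys:det-to-sto}) then gives exponential KL convergence to the invariant Gaussian of \eqref{sys:det-to-sto}. That invariant covariance is the unique solution of the Lyapunov equation, which is $\Sigma$.

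\emph{Necessity.} Suppose a controller $u=K_{1}x+K_{2}\nabla\ln\rho$ exponentially stabilizes every admissible density to $\mathscr{N}(0,\Sigma)$. The closed-loop continuity equation reads
\[
\partial_{t}\rho+{\rm div}\bigl(\rho(A+B_{1}K_{1})x+B_{1}K_{2}\nabla\rho\bigr)=0 .
\]
Testing against $\rho_{\infty}=\mathscr{N}(0,\Sigma)$ with $\Sigma\succ0$, exponential convergence of all admissible densities should force the following:
\begin{itemize}
\item[(a)] $D:=-(B_{1}K_{2}+K_{2}^{\top}B_{1}^{\top})/2\succeq0$, i.e.\ the symmetric part of the diffusion is a genuine diffusion;
\item[(b)] $\rho_{\infty}$ is a steady state.
\end{itemize}

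The antisymmetric part of $B_{1}K_{2}$ is a constant skew matrix, so it contributes ${\rm div}(S\nabla\rho)=0$ and can be dropped. The equation is then the FPK of $dx=A_{cl}x\,dt+\sqrt{2D}\,dW$ with $A_{cl}=A+B_{1}K_{1}$. Stationarity of $\mathscr{N}(0,\Sigma)$ gives the covariance balance
\[
A_{cl}\Sigma+\Sigma A_{cl}^{\top}=-2D .
\]
Expanding $A_{cl}$ and $D$ yields
\[
A\Sigma+\Sigma A^{\top}+B_{1}K_{1}\Sigma+\Sigma K_{1}^{\top}B_{1}^{\top}-B_{1}K_{2}-K_{2}^{\top}B_{1}^{\top}=0 .
\]

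This is not yet of the form \eqref{eq:ME-1}, because the diffusion is $B_{1}MB_{1}^{\top}$-type only through $K_{2}$. To close the gap, write $2D=B_{1}K_{2}'+K_{2}'^{\top}B_{1}^{\top}$ with $K_{2}'=-K_{2}$. Then set $\Theta=K_{1}\Sigma+K_{2}'-\tfrac12B_{1}^{\top}$. Then
\[
B_{1}\Theta+\Theta^{\top}B_{1}^{\top}+B_{1}B_{1}^{\top}=B_{1}K_{1}\Sigma+\Sigma K_{1}^{\top}B_{1}^{\top}+2D ,
\]
and \eqref{eq:ME-1} holds. The Hurwitz requirement in Corollary \ref{cor:det-LTI} is not part of this statement, and sufficiency was shown above to produce it automatically. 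This $\Theta$ absorbs the slack, so solvability of \eqref{eq:ME-1} is exactly the existence of any $B_{1}$-range perturbation balancing the covariance equation.

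\emph{Main obstacle.} The delicate step is justifying (a) and (b) rigorously from the convergence hypothesis. For (b) I will argue that the limit of the flow started at $\rho_{\infty}$ itself must be $\rho_{\infty}$. If the solution is nonconstant, Gaussianity is preserved and the covariance obeys $\dot P=A_{cl}P+PA_{cl}^{\top}+2D$; KL convergence to $\Sigma$ from the start $P(0)=\Sigma$ for every admissible datum, combined with uniqueness, should force $\dot P(0)=0$. For (a), if $D$ had a negative direction, the equation would be backward-parabolic along that direction and ill-posed for generic positive data. That contradicts well-posedness being part of admissibility.
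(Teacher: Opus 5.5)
Your proof is correct. Sufficiency follows the paper's argument. You rewrite \eqref{eq:ME-1} as the Lyapunov equation \eqref{eq:ME-K1} for $A+B_1\Theta\Sigma^{-1}$ and use feedback invariance of controllability to get the Hurwitz property. You do this step by an explicit PBH computation, where the paper cites Lemma~\ref{lem:AB}. You then conclude via Theorem~\ref{thm:Arnold}, as in 3)$\Rightarrow$1) of Corollary~\ref{cor:det-LTI}.

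Necessity is where you genuinely differ. The paper simply invokes Corollary~\ref{cor:det-LTI}. However, that corollary's chain 1)$\Rightarrow$2)$\Rightarrow$3) builds a $\Sigma$ from an arbitrary stabilizing gain $K_1$. On its own it does not give a solution of \eqref{eq:ME-1} for the \emph{prescribed} $\Sigma$. Your route argues as follows. Stationarity of $\mathscr{N}(0,\Sigma)$ gives $A\Sigma+\Sigma A^\top+B_1Y+Y^\top B_1^\top=0$ with $Y=K_1\Sigma-K_2$. The term $B_1B_1^\top$ is then absorbed by taking $\Theta=Y-\tfrac12 B_1^\top$. This ties the equation to the given $\Sigma$ and does not use controllability of $(A,B_1)$. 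It also shows that solvability of \eqref{eq:ME-1} is just the linear condition $A\Sigma+\Sigma A^\top\in\{B_1X+X^\top B_1^\top : X\in\mathbb{R}^{m\times n}\}$.

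Two clean-ups, neither affecting correctness.

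First, drop (a). Substituting the Gaussian directly shows that stationarity is equivalent to $A_{cl}\Sigma+\Sigma A_{cl}^\top+2D=0$ for \emph{any} symmetric $D$, so no sign condition is needed. Your justification of (a) is also not a real contradiction. Data for which the problem is ill-posed are simply not admissible, so the hypothesis says nothing about them.

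Second, (b) is not an obstacle. Apply \eqref{eq:KL rhot} with $\rho_0=\rho_\infty$: the right-hand side vanishes, so $\rho(t,\cdot)\equiv\rho_\infty$. Knowing only that the flow started at $\rho_\infty$ converges back to $\rho_\infty$ would not rule out leaving and returning. What closes the step is the quantitative bound. Alternatively, a convergent trajectory of the affine covariance ODE $\dot P=A_{cl}P+PA_{cl}^\top+2D$ must converge to an equilibrium, which also closes it.
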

\begin{proof}
Suppose that \eqref{sys:LTI} can be stabilized to $\mathscr{N}(0,\Sigma)$
by a controller of the form \eqref{eq:u-K1-K2}. Then by Theorem \ref{cor:det-LTI},
the equation \eqref{eq:ME-1} holds for some $\Theta$. Conversely,
suppose \eqref{eq:ME-1} holds, which can be written as \eqref{eq:ME-K1}
for $K_{1}=\Theta\Sigma^{-1}$. By assumption, $(A,B_{1})$ is controllable,
as a result, $(A+B_{1}K_{1},B_{1})$ is also controllable. Now the
corollary follows from Lemma \ref{lem:AB}. 
\end{proof}

\subsection{Covariance stabilization}\label{subsec:cov-steer}

We now examine the controller (\ref{eq:ctrl-determ}) in the special
case where the initial density $\rho_{0}$ is Gaussian. Assume $\rho_{0}\sim\mathscr{N}(0,\Sigma_{0})$.
Let $A_{cl}=A+B_{1}\Theta\Sigma^{-1}$ be Hurwitz. From (\ref{sys:det-to-sto}),
the density $\rho(t,\cdot)$ remains zero mean Gaussian for all $t$.
That is, $\rho(t,x)\sim\mathscr{N}(0,P(t))$ and hence $\nabla\ln\rho(t,x)=-P^{-1}(t)x$
where $P(t)$ satisfies the following matrix differential equation,
\begin{equation}
\dot{P}=A_{cl}P+PA^{\top}_{cl}+B_{1}B^{\top}_{1},\quad P(0)=\Sigma_{0}.\label{eq:cov-ode}
\end{equation}
The controller is
\begin{align}
u(t,x) & =\left(K+\frac{1}{2}B^{\top}_{1}P^{-1}(t)\right)x.\label{eq: u-det KP}
\end{align}
Thus, in the Gaussian initial case, the density-feedback controller
reduces to a purely time-varying state-feedback law. However, it is
worth mentioning that although (\ref{eq:ctrl-determ}) is local in
the microscopic state $x$, the controller in the form (\ref{eq: u-det KP})
depends on the global information $P(t)$.

A natural question is what the controller (\ref{eq: u-det KP}) does
when the initial condition is not Gaussian. In fact, this corresponds
precisely to covariance control \cite{hotz1987covariance,Hotz1985}.
Even if $\rho_{0}$ is non-Gaussian, the covariance of $x(t)$ still
evolves according to (\ref{eq:cov-ode}). Since $A_{cl}$ is Hurwitzt,
and that $(A_{cl},B_{1})$ is controllable, the covariance $P(t)$
converges exponentially to the unique positive definite solution $\Sigma$
of (\ref{eq:ME-1}).
\begin{rem}
Alternatively, to avoid computing the inverse $P^{-1}(t)$
in \eqref{eq: u-det KP}, one could instead solve the Riccati equation,
\[
\dot{Q}=-QA_{cl}-A^{\top}_{cl}Q-QB_{1}B^{\top}_{1}Q,\;Q(0)=\Sigma^{-1}_{0},
\]
and apply the controller $u(t,x)=\left(K+\frac{1}{2}B^{\top}_{1}Q(t)\right)x$.
However, solving the Lyapunov equation \eqref{eq:cov-ode} may be
numerically more stable than directly integrating the Riccati equation
for $Q(t)$. 
\end{rem}

\section{Nonlinear examples}\label{sec:NL-exmp}

\subsection{The kinetic FPK model}

We first consider a double-integrator ensemble
\begin{align*}
\dot{x} & =v\\
\dot{v} & =u
\end{align*}
where $x,v\in\mathbb{R}^{n}$. The goal is to steer the density of
the ensemble to the Gibbs density $\pi(x,v)=ce^{-\frac{1}{2}|v|^{2}-V(x)}$
where $c$ is a normalizing constant. To achieve that, we consider
\begin{equation}
\begin{bmatrix}dx\\
dv
\end{bmatrix}=\begin{bmatrix}0 & I\\
-I & -\gamma I
\end{bmatrix}\nabla Hdt+\begin{bmatrix}0\\
\sqrt{2\gamma}
\end{bmatrix}dW, \label{eq:kinetic FPK}
\end{equation}
where $H=\frac{1}{2}|v|^{2}+V(x)$ and $\gamma>0$. Here $H$ stands
for the mechanical energy of the system. The FPK equation associated
with \eqref{eq:kinetic FPK} is called the kinetic FPK equation and
it is known that an admissible initial density converges asymptotically
to the steady state $\pi$ \cite{villani2009optimal}, and as a consequence,
the ensemble of the double integrator converges to $\pi$ under the
control law
\begin{align}
u & =-\nabla V(x)-\gamma v-\gamma\frac{\partial\ln\rho(x,v)}{\partial v}.\label{eq:kinetic-u}
\end{align}

Alternatively, we verify the above conclusion using Theorem \ref{thm:main}.

It is obvious that whenever the steady state density $\pi(x,v)$ is
well defined, Assumption H1 and H2 are trivially satisfied. To see
that it satisfies Assumption H3, we verify the Hörmander condition.
Let $V_{0}={\rm col}(v,-\nabla V(x)-\gamma v)$, $V_{i}={\rm col}(0_{n},\sqrt{2\gamma}e_{i})$,
where $e_{i}$ stands for the $i$-th coordinate vector. Then 
\[
[V_{0},V_{i}]=-\begin{bmatrix}0 & I\\
-\nabla^{2}V & -\gamma I
\end{bmatrix}\begin{bmatrix}0\\
\sqrt{2\gamma}e_{i}
\end{bmatrix}=\begin{bmatrix}\sqrt{2\gamma}e_{i}\\
-\gamma\sqrt{2\gamma}e_{i}
\end{bmatrix},
\]
from which it follows that 
\[
{\rm span}_{\mathbb{R}}\{V_{0},V_{1},\cdots,V_{n},[V_{0},V_{1}],\cdots,[V_{0},V_{n}]\}=\mathbb{R}^{2n}.
\]
For Assumption H4, set $h=[0,\sqrt{2\gamma}]^{\top}$, we have
\begin{align*}
h^{\top}\nabla\ln\frac{\rho}{\pi} & =0\Leftrightarrow\nabla_{v}\left(\ln\rho+\frac{1}{2}|v|^{2}+V(x)\right)=0\\
 & \Leftrightarrow\ln\rho=-\frac{1}{2}|v|^{2}+\varphi(t,x)
\end{align*}
for some smooth function $\varphi$. Substituting this into the FPK
equation $\partial_{t}\rho+v\cdot\nabla_{x}\rho-\nabla_{x}V\cdot\nabla_{v}\rho=\gamma\nabla_{v}\cdot(\nabla_{v}\rho+v\rho)$
of \eqref{eq:kinetic FPK}
gives
\[
\partial_{t}\varphi=-v\cdot\nabla_{x}(\varphi+V).
\]
Since $\varphi$ does not dependent on $v$, it follows that $\varphi(t,x)+V(x)=\eta(t)$
for some $\eta$ and $\partial_{t}\eta=0$. Hence, $\eta$ is a constant
and that $\rho(t,x)\propto e^{-\frac{1}{2}|v|^{2}-V(x)}$. That is,
the largest invariant set in $\mathscr{I}$ is the singleton $\{\pi\}$.
Assumptions H1 - H4 are all satisfied.

\subsection{A 3D example}

Consider the three dimensional nonlinear system,
\begin{align}
\dot{x}_{1} & =x_{2}\nonumber \\
\dot{x}_{2} & =-x_{1}-\alpha(x_{1})x_{3}\label{eq:sys_NL-examp}\\
\dot{x}_{3} & =u\nonumber 
\end{align}
where $\alpha:\mathbb{R}\to\mathbb{R}$ is a $C^{1}$ function which
is either strictly negative or strictly positive on $\mathbb{R}$.
The objective is to steer any admissible positive initial density
to the standard Gaussian distribution $\mathscr{N}(0,I_{3})$.

To use the port-Hamiltonian structure to stabilize the Gaussian distribution,
we choose $H(x)=\frac{1}{2}|x|^{2}$. The matching condition requires
\[
\begin{bmatrix}x_{2}\\
-x_{1}-\alpha(x_{1})x_{3}\\
v
\end{bmatrix}=(J(x)-R(x))\begin{bmatrix}x_{1}\\
x_{2}\\
x_{3}
\end{bmatrix},
\]
together with ${\rm Im}R\subset{\rm Im}\{[0,0,1]^{\top}\}$. Consider
\begin{equation}
J(x)=\begin{bmatrix}0 & 1 & 0\\
-1 & 0 & -\alpha(x_{1})\\
0 & \alpha(x_{1}) & 0
\end{bmatrix},\; R=\begin{bmatrix}0\\
 & 0\\
 &  & 1
\end{bmatrix}. \label{eq:J-R-examp}
\end{equation}
and $v(x)=\alpha(x_{1})x_{2}-x_{3}$. The complete controller should
then take the form 
\begin{equation}
u(t,x)=v(x)-\frac{\partial\ln\rho(t,x)}{\partial x_{3}}.\label{eq:u-NL_exmp}
\end{equation}

We verify that with this controller, the Assumptions H1 - H4 are satisfied. 

Assumption H1 is trivial. For Assumption H2, take $h=[0,0,\sqrt{2}]^{\top}$
so that $R=\frac{1}{2}hh^{\top}$. Moreover, $J$ is divergence-free.
Indeed, $J$ has the form \eqref{eq:div-free J} for $\phi(x)=-x_{3}+\int^{x_{1}}_{0}\alpha(s){\rm d}s$. 

For Assumption H3, we verify the Hörmander condition. Let {\small
\[
V_{0}(x)=\begin{bmatrix}x_{2}\\
-x_{1}-\alpha(x_{1})x_{3}\\
\alpha(x_{1})x_{2}-x_{3}
\end{bmatrix},\quad V_{1}(x)=\begin{bmatrix}0\\
0\\
1
\end{bmatrix}.
\]
\begin{align*}
V_{2} & =[V_{0},V_{1}]=\begin{bmatrix}0\\
\alpha(x_{1})\\
1
\end{bmatrix},V_{3}=[V_{0},V_{2}]=\begin{bmatrix}-\alpha(x_{1})\\
\alpha'(x_{1})x_{2}+\alpha(x_{1})\\
1-\alpha(x_{1})^{2}.
\end{bmatrix}
\end{align*}
}At every $x$, 
\begin{align*}
{\rm span}_{\mathbb{R}}\left\{ V_{1},V_{2},V_{3}\right\}  & ={\rm span}_{\mathbb{R}}\left\{ \begin{bmatrix}0\\
0\\
1
\end{bmatrix},\begin{bmatrix}0\\
\alpha(x_{1})\\
0
\end{bmatrix},\begin{bmatrix}\alpha(x_{1})\\
0\\
0
\end{bmatrix}\right\} \\
 & =\mathbb{R}^{3},
\end{align*}
since $\alpha$ is non-zero everywhere.

Finally, for Assumption H4, note
\begin{align*}
\mathscr{I}= & \left\{ \rho\in\mathscr{P}(\mathbb{R}^{n}):h(\cdot)^{\top}\nabla\log\frac{\rho(\cdot)}{\pi(\cdot)}=0\right\} \\
= & \left\{ \rho\in\mathscr{P}(\mathbb{R}^{n}):\frac{\partial\ln\rho}{\partial x_{3}}=-x_{3}\right\} \\
= & \left\{ \rho:\ln\rho(x)=-\frac{1}{2}x^{2}_{3}+\psi(x_{1},x_{2}),\;\int_{\mathbb{R}^{n}}\rho{\rm d}x=1\right\} .
\end{align*}
Recall the FPK equation
\[
\partial_{t}\rho=-\nabla\cdot\{\rho(J-R)\nabla H)-R\nabla\ln\rho\}.
\]
On the set $\mathscr{I}$, the term $-R\nabla\ln\rho$ is simply $x_{3}$.
Hence the FPK equation reduces to 
\[
\partial_{t}\rho=-\nabla\cdot\rho\begin{bmatrix}x_{2}\\
-x_{1}-\alpha(x_{1})x_{3}\\
\alpha(x_{1})x_{2}
\end{bmatrix}.
\]
Plugging $\ln\rho(t,x)=-\frac{1}{2}x^{2}_{3}+\psi(t,x_{1},x_{2})$
into the above equation, we get $\partial_{t}\psi=x_{1}\partial_{x_{2}}\psi-x_{2}\partial_{x_{1}}\psi+x_{3}(\alpha(x_{1})\partial_{x_{2}}\psi+\alpha(x_{1})x_{2})$.
Since $\psi$ does not depend on $x_{3}$, we deduce that
\begin{align}
\alpha(x_{1})(\partial_{x_{2}}\psi+x_{2}) & =0, \label{eq:alpha}
\end{align}
and 
\begin{equation}
\partial_{t}\psi=x_{1}\partial_{x_{2}}\psi-x_{2}\partial_{x_{1}}\psi.\label{eq:transport-eq}
\end{equation}
It should be noted that we need the strict positivity of $\rho(t,\cdot)$ here, which is guaranteed
by Assumption H3. From \eqref{eq:alpha}, $\partial_{x_{2}}\psi+x_{2}=0$
or $\psi(t,x_{1},x_{2})=-\frac{1}{2}x^{2}_{2}+\eta(t,x_{1})$ for
some $\eta$. Plugging $\psi$ into \eqref{eq:transport-eq} yields
$\partial_{t}\eta=-x_{2}(x_{1}+\partial_{x_{1}}\eta)$. Since $\eta$
is independent of $x_{2}$, we must have $\partial_{t}\eta=\partial_{x_{1}}\eta+x_{1}=0$
from which we obtain $\eta=-\frac{1}{2}x^{2}_{1}+C$ for some time-independent
constant $C$. Putting these together, we get
\[
\ln\rho(t,x_{1},x_{2},x_{3})=-\frac{1}{2}|x|^{2}+C,
\]
which is exactly $\pi$. Thus, the largest invariant set in $\mathscr{I}$
is the singleton $\pi$.

We arrive at the following proposition:
\begin{prop}
Consider the system \eqref{sys:NL} describing an ensemble. Assume
that $\alpha\in C^{1}$ is strictly positive or strictly negative
on $\mathbb{R}$. Then, for every admissible positive smooth initial
density $\rho_{0}$ with ${\rm KL}(\rho_{0}\|\mathscr{N}(0,I_{3}))<\infty$,
the controller \eqref{eq:u-NL_exmp} steers $\rho_{0}$ to $\mathscr{N}(0,I_{3})$
asymptotically in the narrow topology.
\end{prop}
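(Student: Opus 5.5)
The plan is to obtain the proposition as a direct application of Theorem \ref{thm:match}, with the structural data already fixed above: $H(x)=\frac{1}{2}|x|^{2}$, $J$ and $R$ as in \eqref{eq:J-R-examp}, and $v(x)=\alpha(x_{1})x_{2}-x_{3}$. (The proposition cites \eqref{sys:NL}; the system meant is \eqref{eq:sys_NL-examp}.)

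\emph{Step 1: matching conditions.} First check \eqref{eq:match2} with $f=(x_{2},-x_{1}-\alpha(x_{1})x_{3},0)^{\top}$ and $g=e_{3}$. We have $\nabla H=x$, and
\[
(J-R)x=\bigl(x_{2},\,-x_{1}-\alpha(x_{1})x_{3},\,\alpha(x_{1})x_{2}-x_{3}\bigr)^{\top},
\]
which equals $f+gv$. Next, $J=-J^{\top}$, $R=R^{\top}\succeq0$, and ${\rm Im}R={\rm span}\{e_{3}\}={\rm Im}g$, so \eqref{eq:match3} holds. Since $g^{\dagger}=e_{3}^{\top}$, we get $g^{\dagger}R\nabla\ln\rho=\partial_{x_{3}}\ln\rho$. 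Hence \eqref{eq:closedu} is exactly the controller \eqref{eq:u-NL_exmp}.

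\emph{Step 2: Assumptions H1--H4.} These are the verifications carried out just before the proposition, which I would collect as follows.
\begin{itemize}
\item H1 holds because $e^{-|x|^{2}/2}$ is integrable, and then $\pi=\mathscr{N}(0,I_{3})$.
\item For H2, take $h=\sqrt{2}e_{3}$, so that $R=\frac{1}{2}hh^{\top}$. $J$ is of the form \eqref{eq:div-free J} with $\phi=-x_{3}+\int_{0}^{x_{1}}\alpha$, and each column of such a $J$ is divergence free because mixed partials commute.
\item For H3, use the H\"ormander computation: $V_{1}$, $[V_{0},V_{1}]$ and $[V_{0},[V_{0},V_{1}]]$ span $\mathbb{R}^{3}$ wherever $\alpha(x_{1})\neq0$. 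This holds everywhere by the sign assumption on $\alpha$.
\item For H4, use the invariant-set argument. On $\mathscr{I}$ we have $\ln\rho=-\frac{1}{2}x_{3}^{2}+\psi(t,x_{1},x_{2})$. Substituting into the FPK equation and separating the coefficients of $x_{3}$ gives $\alpha(x_{1})(\partial_{x_{2}}\psi+x_{2})=0$ and $\partial_{t}\psi=x_{1}\partial_{x_{2}}\psi-x_{2}\partial_{x_{1}}\psi$. Since $\alpha\neq0$, the first identity forces $\psi=-\frac{1}{2}x_{2}^{2}+\eta(t,x_{1})$. The second then forces $\partial_{t}\eta=0$ and $\partial_{x_{1}}\eta=-x_{1}$, so $\rho$ stays equal to $\pi$.
\end{itemize}

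\emph{Step 3: conclusion.} With all hypotheses of Theorem \ref{thm:match} in place, that theorem (via Theorem \ref{thm:main}) gives narrow convergence of every admissible positive initial density with ${\rm KL}(\rho_{0}\|\pi)<\infty$ to $\pi$.

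I expect H4 to be the main obstacle. The separation step uses that $\ln\rho$ is well defined and differentiable along the invariant trajectory, which requires the positivity from H3 and smoothness of $\rho$; the smoothness follows from hypoellipticity. It also uses that an invariant trajectory stays in $\mathscr{I}$ for all $t$, so the identities can be differentiated in time. I would state these regularity points explicitly, since they are why smoothness and positivity of $\rho_{0}$ are assumed. The sign condition on $\alpha$ enters twice, in H3 and in the division step of H4.
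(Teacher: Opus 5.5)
Your proposal is correct and follows the same route as the paper. You verify the matching conditions \eqref{eq:match2}--\eqref{eq:match3} for the given $J$, $R$, $H$, $v$, check H1--H4 by the same divergence-free, H\"ormander-bracket and invariant-set computations, and conclude via Theorem \ref{thm:match}; your side remarks are apt, namely that the proposition should cite \eqref{eq:sys_NL-examp} rather than \eqref{sys:NL}, and that positivity and smoothness of $\rho$ justify the coefficient-separation step in H4.
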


\subsection{Unicycle}\label{subsec:unicycle}

Consider the unicycle model,
\begin{align}
\dot{x} & =u_{1}\cos\theta, \nonumber \\
\dot{y} & =u_{1}\sin\theta, \label{sys:unicycle}\\
\dot{\theta} & =u_{2}, \nonumber 
\end{align}
whose state evolves in $\mathbb{R}^{2}\times S^{1}$, where $S^1$ is the unit circle.

We aim to stabilize the $xy$-marginal of an ensemble of \eqref{sys:unicycle}
to the standard Gaussian distribution. To that end, we choose $H(x,y,\theta)=\frac{1}{2}(x^{2}+y^{2})+\varphi(\theta)$
and stabilize the system to the steady state,
\begin{equation}
\pi(x,y,\theta)=\frac{e^{-H(x,y,\theta)}}{\int_{\mathbb{R}^{2}\times S^{1}}e^{-H(x,y,\theta)}dxdyd\theta}, \label{eq:pi_unicycle}
\end{equation}
with $\varphi(\theta)$ chosen as some smooth function. In the following
we take $\varphi(\theta)=1-\cos\theta$.

By choosing 
\[
v(x,y,\theta)=\begin{bmatrix}-(x\cos\theta+y\sin\theta)\\
-\sin\theta
\end{bmatrix},
\]
the matching condition \eqref{eq:match2} holds for $J=0$ and
\[
R(x,y,\theta)=\begin{bmatrix}(\cos\theta)^{2} & \sin\theta\cos\theta & 0\\
\sin\theta\cos\theta & (\sin\theta)^{2} & 0\\
0 & 0 & 1
\end{bmatrix}.
\]

Theorem \ref{thm:match} then suggests the following control law:
\begin{equation}
u(x,y,\theta)=\begin{bmatrix}-(x\cos\theta+y\sin\theta)\\
-\sin\theta
\end{bmatrix}-\begin{bmatrix}\cos\theta & \sin\theta & 0\\
0 & 0 & 1
\end{bmatrix}\nabla\ln\rho. \label{eq:u-unicycle}
\end{equation}

We now verify the structural assumptions. Assumption H1 holds because
$e^{-H}$ is integrable on $\mathbb{R}^{2}\times S^{1}$. Assumption
H2 also holds since $J=0$, and with 
\[
h(x,y,\theta)=\sqrt{2}\begin{bmatrix}\cos\theta & \sin\theta & 0\\
0 & 0 & 1
\end{bmatrix},
\]
we have $R=\frac{1}{2}hh^{\top}$. For Assumption H3, let 
\[
V_{1}=\begin{bmatrix}\cos\theta\\
\sin\theta\\
0
\end{bmatrix},\;V_{2}=\begin{bmatrix}0\\
0\\
1
\end{bmatrix}.
\]
It is well known that ${\rm span}_{\mathbb{R}}\{V_{1},V_{2},[V_{0},V_{1}]\}=\mathbb{R}^{3}$
at every point. Indeed, we have $[V_{0},V_{1}]=[-\sin\theta,\cos\theta,0]^{\top}$. 

It remains to check Assumption H4. Note that
\[
\begin{bmatrix}\cos\theta & \sin\theta & 0\\
0 & 0 & 1
\end{bmatrix}\left(\nabla\ln\rho+\begin{bmatrix}x\\
y\\
\sin\theta
\end{bmatrix}\right)=0,
\]
is equivalent to 
\begin{align*}
\cos\theta(x+\partial_{1}\ln\rho)+\sin\theta(y+\partial_{2}\ln\rho) & =0,\\
\partial_{3}\ln\rho+\sin\theta & =0.
\end{align*}
From the second equation, we deduce $\ln\rho=\cos\theta+\psi(x,y)$.
Substituting this into the first equation, we obtain $\cos\theta(x+\partial_{1}\psi)+\sin\theta(y+\partial_{2}\psi)=0$
from which we deduce $x+\partial_{1}\psi=y+\partial_{2}\psi=0$ or
\[
\rho(x,y,\theta)=c\exp\left(-\frac{1}{2}x^{2}-\frac{1}{2}y^{2}+\cos\theta\right),
\]
which is exactly the steady state.

We have proved the following proposition.
\begin{prop}
\label{prop:unicycle}Consider the unicycle ensemble \eqref{sys:unicycle}
For every admissible posit\-ive smooth initial density $\rho_{0}$ with
${\rm KL}(\rho_{0}\|\pi)<\infty$ with $\pi$ defined as in \eqref{eq:pi_unicycle},
the controller \eqref{eq:u-unicycle} steers the density to $\pi$
asymptotically in the narrow topology. In particular, the $\ensuremath{(x,y)}$-marginal
converges to the standard Gaussian distribution on $\mathbb{R}^{2}$.
\end{prop}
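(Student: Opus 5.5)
The plan is to reduce Proposition \ref{prop:unicycle} to Theorem \ref{thm:match}, with Theorem \ref{thm:main} supplying the convergence. There are three tasks: check that the unicycle data satisfy the matching conditions \eqref{eq:match2}--\eqref{eq:match3}; check Assumptions H1--H4 on the state space $\mathbb{R}^2\times S^1$; and read off the marginal statement.

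\emph{Matching.} Write $g(x,y,\theta)$ for the input matrix of \eqref{sys:unicycle}, whose columns are $(\cos\theta,\sin\theta,0)^\top$ and $(0,0,1)^\top$. Since $f=0$, condition \eqref{eq:match2} with $J=0$ asks for $g v=-R\nabla H$. We have $\nabla H=(x,y,\sin\theta)^\top$, and $R=gg^\top$ by inspection. Hence $-R\nabla H=g\,(-g^\top\nabla H)$, and $-g^\top\nabla H$ is exactly the stated $v$. The inclusion ${\rm Im}R\subset{\rm Im}g$ follows from $R=gg^\top$. Since $g$ has full column rank, $g^\dagger R=g^\dagger g g^\top=g^\top$, so the controller \eqref{eq:closedu} is precisely \eqref{eq:u-unicycle}.

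\emph{Assumptions.} H1 holds because $e^{-H}=e^{-(x^2+y^2)/2}e^{-(1-\cos\theta)}$ is integrable on $\mathbb{R}^2\times S^1$, the circle being compact. H2 holds trivially with $J=0$ and $h=\sqrt2 g$.

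For H3 I would use the Hörmander condition. The Stratonovich fields are $V_1$, $V_2$ and $V_0$, and $[V_2,V_1]=(-\sin\theta,\cos\theta,0)^\top$ already spans the missing direction, so the condition holds at every point. One caveat: the cited positivity result is stated on $\mathbb{R}^n$. I would note that hypoelliptic strong maximum principle arguments are local, so they apply equally on the manifold $\mathbb{R}^2\times S^1$. Alternatively, one can lift $\theta$ to $\mathbb{R}$ with $2\pi$-periodic data.

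For H4 I would follow the computation above. The condition $h^\top\nabla\ln(\rho/\pi)=0$ forces $\partial_\theta\ln\rho=-\sin\theta$, hence $\ln\rho=\cos\theta+\psi(x,y)$. It then forces $\cos\theta\,(x+\partial_x\psi)+\sin\theta\,(y+\partial_y\psi)=0$. Since $\psi$ does not depend on $\theta$, evaluating at $\theta=0$ and $\theta=\pi/2$ gives $\partial_x\psi=-x$ and $\partial_y\psi=-y$. Thus $\mathscr I=\{\pi\}$ already, and the largest invariant subset is trivially the singleton.

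\emph{Conclusion and main obstacle.} Theorem \ref{thm:match} gives $\rho_t\rightharpoonup\pi$ narrowly. For the marginal statement, take any bounded continuous $\varphi(x,y)$ and view it as a bounded continuous function on $\mathbb{R}^2\times S^1$; narrow convergence then passes to the $(x,y)$-marginals. The marginal of $\pi$ factorizes as a standard Gaussian in $(x,y)$ times a von Mises density in $\theta$, so the limit is $\mathscr N(0,I_2)$.

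The main obstacle is not algebraic. It is that Theorem \ref{thm:main} is formulated on $\mathbb{R}^n$, while here $\theta\in S^1$. I would redo Steps 1--4 of its proof on $\mathbb{R}^2\times S^1$. The integrations by parts hold because there are no boundary terms in $\theta$ (periodicity) and decay holds in $(x,y)$. Precompactness of KL sublevel sets and metrizability of the narrow topology hold on any Polish space. Once these are checked, the proof of Theorem \ref{thm:main} carries over verbatim.
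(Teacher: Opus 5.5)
Your proposal is correct and follows the paper's own argument. You verify the matching \eqref{eq:match2}--\eqref{eq:match3} with $J=0$ and $R=gg^\top$, check H1--H4 (in particular showing $\mathscr I=\{\pi\}$ directly from $\partial_\theta\ln\rho=-\sin\theta$), and invoke Theorem \ref{thm:match}. Your additions are refinements the paper leaves implicit, not a different route: the explicit choice $h=\sqrt2 g$ with the bracket $[V_2,V_1]$, the marginal argument, and the remark that Theorem \ref{thm:main} has to be transplanted to $\mathbb{R}^2\times S^1$.
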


\section{Simulations}\label{sec:simu}

\begin{example}
Consider the double integrator 
\begin{align*}
\dot{x} & =y,\\
\dot{y} & =u.
\end{align*}
Suppose that the desired distribution is a Gaussian mixture,
\[
\rho_{\infty}=0.2\mathscr{N}(m_{1},\Sigma_{1})+0.8\mathscr{N}(m_{2},\Sigma_{2}),
\]
where $m_{1}=[1,0]^{\top},\;m_{2}=[-1,0]^{\top},\;\Sigma_{1}={\rm diag}(0.1^{2},0.3^{2}),$
and $\Sigma_{2}={\rm diag}(0.5^{2},0.3^{2})$. Consider the energy
function,
\[
H(x,y)=-\ln(0.2\mathscr{N}(m_{1},\Sigma_{1})+0.8\mathscr{N}(m_{2},\Sigma_{2})).
\]
Let $a(x,y)=0.2\mathscr{N}(m_{1},\Sigma_{1})$ and $b(x,y)=0.8\mathscr{N}(m_{2},\Sigma_{2})$.
One can verify that the controller 
\[
v(x,y)=\frac{9(x-1)a(x,y)+0.36(x+1)b(x,y)}{a(x,y)+b(x,y)}-r\frac{y}{0.09}
\]
shapes the double integrator into 
\[
\begin{bmatrix}\dot{x}\\
\dot{y}
\end{bmatrix}=\begin{bmatrix}0 & -0.09\\
0.09 & -r
\end{bmatrix}\nabla H(x,y).
\]
Therefore, we propose the density based feedback controller as 
\[
u(x,y,t)=v(x,y)-r\frac{\partial\ln\rho(x,y,t)}{\partial y}.
\]
Take for example, $r=0.2$. The simulation is shown in Fig. \ref{fig:double_int}, which shows that the system
converges toward the desired non-Gaussian distribution in a few seconds.

\end{example}

\begin{figure}[ht]
	\centering
\includegraphics[scale=0.83]{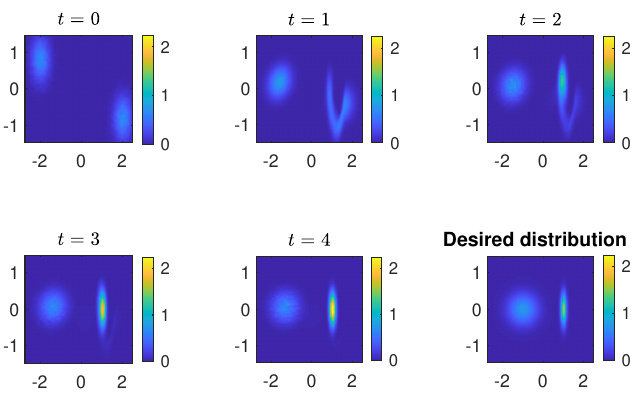}
\caption{Double integrator and Gaussian mixture teminal. }\label{fig:double_int}
\end{figure}

\begin{example}
By Corollarys \ref{cor:det-LTI} and \ref{cor:det-LTI2}, a linear
ensemble of \eqref{sys:LTI} can be stabilized to a given Gaussian
distribution $\mathscr{N}(0,\Sigma)$ if and only if the matrix equation
\eqref{eq:ME-1} has a solution $\Theta$. However, in practice, the
matrix equation may not have a solution when $B_{1}$ has very low
rank. A more realistic objective is then to seek a ``least-square
approximation''. 
\begin{align*}
\min_{\Gamma,\,\Theta}\quad\quad & \|\Gamma-\Sigma\|^{2}_{F}\\
\text{subject to: } & A\Gamma+\Gamma A^{\top}+B_{1}\Theta+\Theta^{\top}B_{1}+B_{1}B^{\top}_{1}=0,\\
 & \Gamma\succ0.
\end{align*}
Then we apply the controller $u(t,x)=\Theta\Gamma^{-1}x-\frac{1}{2}B^{\top}_{1}\nabla\ln\rho(t,x)$.
In this example, we choose $A=\begin{bmatrix}1 & -2\\
0 & 0
\end{bmatrix}$, $B_{1}=\begin{bmatrix}0\\
1
\end{bmatrix}$ and the desired $\Sigma=\begin{bmatrix}1 & 0.4\\
0.4 & 0.5
\end{bmatrix}$. It can be verified that the equation \eqref{eq:ME-1} has no solution.
As a compromise, we use the above optimization program to find a ``lease-square
approximation''. The result is shown in Fig. \ref{fig:lst_LTI}. We
can see that the steady state is relatively close to the desired distribution.

\end{example}
\begin{figure}[ht]
	\centering
\includegraphics[scale=0.9]{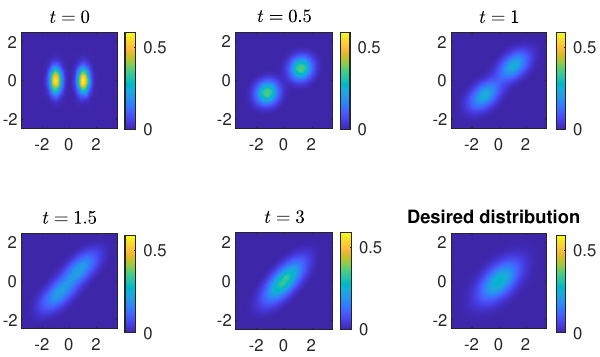}

\caption{Evolution of a ``least-square'' solution. }\label{fig:lst_LTI}
\end{figure}

\begin{example}
We simulate the 3D nonlinear example studied in Section \ref{subsec:unicycle}.
The initial distribution is taken as a Gaussian mixture $0.5\mathscr{N}(\mu_{1},\Sigma)+0.5\mathscr{N}(\mu_{2},\Sigma)$
with $\mu_{1}=[1,0,0]^{\top}$, $\mu_{2}=[-1,0,1]^{\top}$ and $\Sigma={\rm diag}(0.28^{2},0.4^{2},0.28^{2})$.
To visualize the evolution of the density, we plot the evolution of
the two-dimensional marginal distrib\-utions of $\rho(t,x_{1},x_{2},x_{3})$
in Fig. \ref{fig:NL_3D}. The simulation shows that the initial state converges to a Gaussian distribution in a few seconds.

\begin{figure}
\begin{centering}
\includegraphics[scale=0.6]{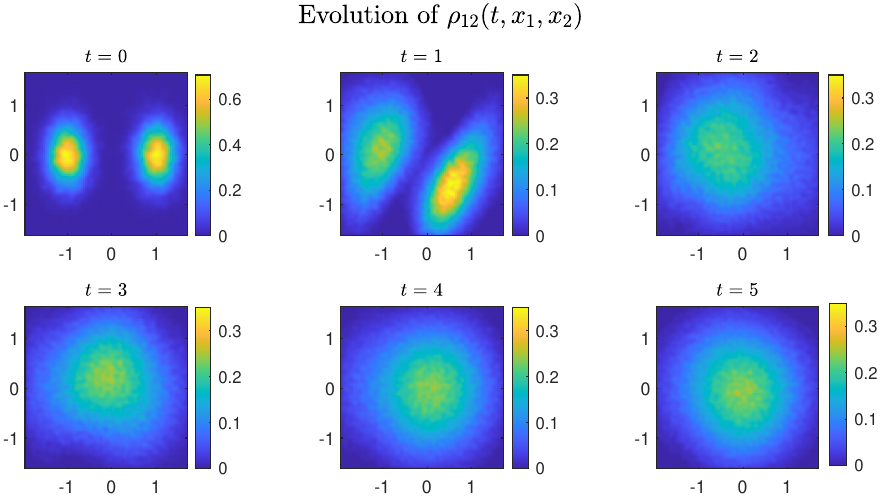}
\par\end{centering}
\vspace{1em}
\begin{centering}
\includegraphics[scale=0.6]{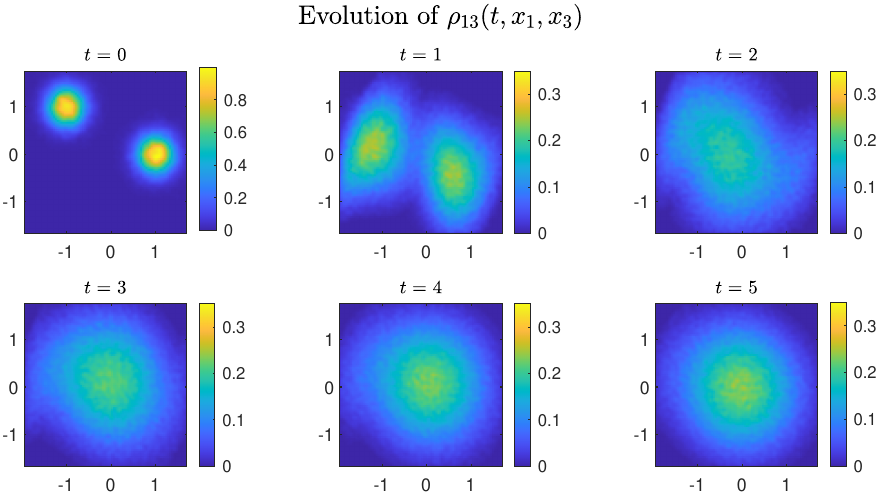}
\par\end{centering}
\vspace{1em}
\begin{centering}
\includegraphics[scale=0.6]{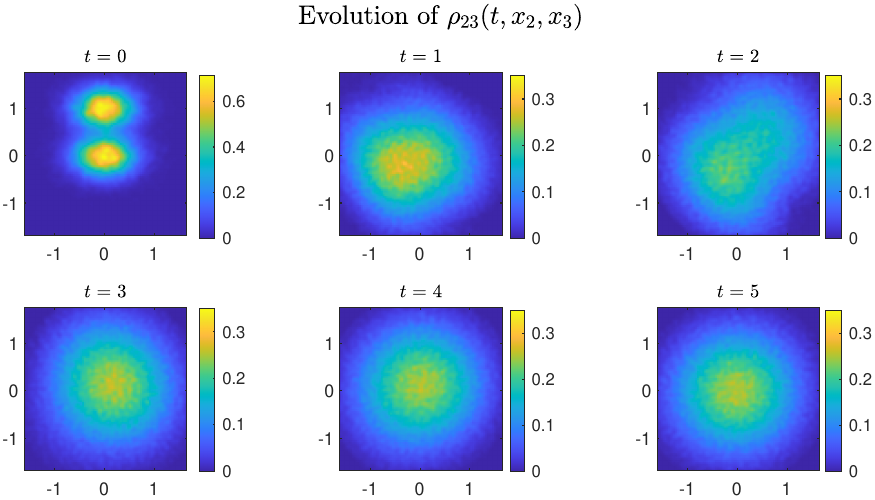}
\par\end{centering}
\caption{The evolution of an ensemble of \eqref{eq:sys_NL-examp} under the
control law \eqref{eq:u-NL_exmp}.}\label{fig:NL_3D}
\end{figure}
\end{example}

\section{Conclusion}

In this paper, we have developed an infinite-horizon mass trans\-portation
framework for nonlinear systems. The approach is inspired by stochastic
and dissipative systems: by emulating the steady-state behavior of
a suitably chosen stochastic port-Hamiltonian system, we obtain
decentralized feedback laws that act locally at the particle level
yet induce global distributional stabilization. This leads to a feedback-based
alternative to finite-horizon optimal transport, with local and decentralized
implementation through particle-based density estimation. For linear
systems with Gaussian target distributions, we further derive necessary
and sufficient conditions for stabilizing density feedback and provide
explicit controller constructio\-ns. Nonlinear examples and numerical
simulations demonstrate the applicability and effectiveness of the
proposed framework.

\section{Appendix}

\setcounter{thm}{0}
\setcounter{lem}{0}
\setcounter{defn}{0}
\setcounter{rem}{0}

\renewcommand{\thethm}{A\arabic{thm}}
\renewcommand{\thelem}{A\arabic{lem}}
\renewcommand{\therem}{A\arabic{rem}}
\renewcommand{\thedefn}{A\arabic{defn}}

\begin{defn}[\cite{haraux1991systemes}]
\label{def:System-Dyn}Consider a metrizable topological space $X$.
A dynamical system on $X$ is a one parameter family of mappings $\{S_{t}\}_{t\ge0}$
on $X$, satisfying the following conditions:

(i) $S_{t}:X\to X$ is continuous for every $t\ge0$;

(ii) $S_{0}$ is the identify map on $X$;

(iii) $S_{t+s}=S_{t}\circ S_{s}$, $\forall s,t\ge0$;

(iv) For any $x\in X$, $t\mapsto S_{t}x$ is continuous.

Let $x\in X$, then 
\[
\omega(x)=\bigcap_{s>0}\overline{\bigcup_{t\ge s}\{S_{t}x\}}
\]
is called the $\omega$-limit set of $x$. 
\end{defn}

\begin{thm}[\cite{haraux1991systemes}]
\label{thm:Lasalle}Consider a dynamical system $\{S(t)\}_{t\ge0}$
as in Definition \ref{def:System-Dyn}. Let $x\in X$, and suppose
$\cup_{t\ge0}\{S(t)x\}$ is precompact in $X$. Then $\omega(x)$
is a compact connected set in $X$ and that $S(t)x$ converges asymptotically
to $\omega(z)$.
\end{thm}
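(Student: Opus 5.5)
Since Theorem \ref{thm:Lasalle} is a purely topological statement, I would prove it directly from Definition \ref{def:System-Dyn}, reading the conclusion as convergence of $S(t)x$ to $\omega(x)$ (the ``$\omega(z)$'' in the statement appears to be a typo). First I fix a metric $d$ compatible with the topology of $X$; this is possible because $X$ is metrizable. Let $K=\overline{\cup_{t\ge0}\{S(t)x\}}$, which is compact by hypothesis. For each $s>0$ set $K_{s}=\overline{\cup_{t\ge s}\{S(t)x\}}$. Each $K_{s}$ is a nonempty closed subset of $K$, hence compact, and the family decreases in $s$. By the finite intersection property, $\omega(x)=\cap_{s>0}K_{s}$ is nonempty and compact. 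I would also record the sequential characterization: $y\in\omega(x)$ if and only if there are $t_{n}\to\infty$ with $S(t_{n})x\to y$. This follows from metrizability.

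Next I would prove convergence in the sense that $\operatorname{dist}(S(t)x,\omega(x))\to0$ as $t\to\infty$. The argument is by contradiction. Suppose there exist $\varepsilon>0$ and $t_{n}\to\infty$ with $\operatorname{dist}(S(t_{n})x,\omega(x))\ge\varepsilon$. The points $S(t_{n})x$ lie in the compact set $K$, so a subsequence converges to some $y$. By the sequential characterization, $y\in\omega(x)$. On the other hand, $y\mapsto\operatorname{dist}(y,\omega(x))$ is continuous, so $\operatorname{dist}(y,\omega(x))\ge\varepsilon$, which is a contradiction.

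The main obstacle is connectedness, where the continuity of orbits, property (iv) of Definition \ref{def:System-Dyn}, is essential. Suppose instead that $\omega(x)=A\cup B$ with $A,B$ disjoint, nonempty and closed. Then $A$ and $B$ are compact, so $\delta:=\operatorname{dist}(A,B)>0$. Consider the continuous function $\phi(t)=\operatorname{dist}(S(t)x,A)-\operatorname{dist}(S(t)x,B)$.
\begin{itemize}
\item Since points of $A$ and of $B$ are both limits along sequences $t_{n}\to\infty$, $\phi$ takes values below $-\delta/2$ and values above $\delta/2$ at arbitrarily large times.
\item By the intermediate value theorem, there is a sequence $\tau_{n}\to\infty$ with $\phi(\tau_{n})=0$.
\item At such times the point $S(\tau_{n})x$ is equidistant from $A$ and $B$. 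The triangle inequality then gives $\operatorname{dist}(S(\tau_{n})x,\omega(x))\ge\delta/2$.
\end{itemize}
This contradicts the convergence established in the previous step, so $\omega(x)$ is connected, which completes the proof.

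Properties (i)--(iii) would only be needed for invariance of $\omega(x)$, which is used in Step 2 of the proof of Theorem \ref{thm:main} but is not part of this statement.
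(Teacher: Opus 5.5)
Your argument is correct and complete. The paper gives no proof of this statement: it quotes the result from Haraux and uses it as a black box in Step~4 of the proof of Theorem~\ref{thm:main}. So the only meaningful comparison is with the standard textbook argument.

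You read the statement the intended way. The ``$\omega(z)$'' must be $\omega(x)$. ``Precompact'' must mean relatively compact, i.e.\ compact closure, which is what Step~3 of that proof actually supplies. Under a merely totally-bounded reading in a non-complete metric, $\omega(x)$ could be empty: take $S(t)x=xe^{-t}$ on $(0,1]$.

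Each of your steps closes:
\begin{itemize}
\item Nonemptiness and compactness follow correctly from the nested $K_s$.
\item The sequential characterization of $\omega(x)$ is sound.
\item The contradiction argument gives ${\rm dist}(S(t)x,\omega(x))\to0$.
\item In the connectedness step, at a zero $\tau$ of $\phi$ one has $2\,{\rm dist}(S(\tau)x,\omega(x))={\rm dist}(S(\tau)x,A)+{\rm dist}(S(\tau)x,B)\ge\delta$, which gives the contradiction.
\end{itemize}

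The more common route to connectedness is purely topological. Each $K_s$ is connected, being the closure of the image of $[s,\infty)$ under the continuous map $t\mapsto S(t)x$ (property (iv)). A nested family of compact connected sets in a Hausdorff space has connected intersection. To see this, separate two pieces of $\omega(x)$ by disjoint open sets $U,V$. Compactness then forces some $K_s\subset U\cup V$, yet $K_s$ meets both sets, contradicting its connectedness.

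That version is independent of the convergence step and of any choice of metric. Yours instead reuses the convergence result together with the intermediate value theorem. Both use only property (iv) among (i)--(iv), as you correctly note.
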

\begin{thm}[\cite{Arnold2014}]
\label{thm:Arnold}Let the initial density $\rho_{0}$ be Lebesgue
measurable, and $D=D^{\top}\in\mathbb{R}^{n\times n}$ be a positive
semi-definite matrix. Let $\lambda>0$ be a positive constant. Then
there exist a probability density $\rho_{\infty}$ and a constant
$k>0$ such that the solution $\rho(t,\cdot)$ of the following Fokker-Planck
equation on $\mathbb{R}_{>0}\times\mathbb{R}^{n}$ 
\[
\partial_{t}\rho={\rm div}(\rho Ax+D\nabla_{x}\rho),
\]
satisfies 
\[
{\rm KL}(\rho(t,\cdot)\|\rho_{\infty})\le ke^{-\lambda t}{\rm KL}(\rho_{0}\|\rho_{\infty}),
\]
for all $t\ge0$ and initial probability densities $\rho_{0}$ if
and only if the following two conditions hold: 
\begin{enumerate}
\item[{{{[}C1{]}}}] There is no non-trivial $A^{\top}$-invariant subspace of $\ker D$, and
\item[{{{[}C2{]}}}] $-A$ is Hurwitz, and the convergence rate of $\dot{x}=-Ax$ is no
less than $\lambda$. 
\end{enumerate}
Moreover, the steady state is of the Gaussian form $\mathscr{N}(0,\Sigma)$,
in which $\Sigma$ is the unique positive definite solution to 
\[
A\Sigma+\Sigma A^{\top}=2D.
\]
\end{thm}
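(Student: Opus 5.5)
The plan is to treat the two directions separately. Necessity will be obtained by testing the estimate on special initial densities: shifted copies of the steady state, and densities that are non-Gaussian in a noise-free direction. Sufficiency will follow from a hypocoercive (modified Fisher information) entropy method in the spirit of Arnold and Erb, finished with the Gaussian log-Sobolev inequality.

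\emph{Necessity of [C2].} Let $\rho_\infty$ be the steady state and set $m(t)=\int x\rho(t,x){\rm d}x$. Multiplying the equation by $x$ and integrating by parts gives $\dot m=-Am$. The equation has the constant-coefficient form $\partial_t\rho={\rm div}(\rho Ax+D\nabla\rho)$, and translating the state by a time-dependent vector changes the drift only by an $x$-independent term. Hence if $\rho_0=\rho_\infty(\cdot-m_0)$, the solution is exactly $\rho(t,\cdot)=\rho_\infty(\cdot-m(t))$.

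Convergence forces $\rho_\infty$ to have a finite second moment. In the Gaussian case the divergence of a translate is $\tfrac12 m^\top\Sigma^{-1}m$; in general the divergence of a translate of $\rho_\infty$ is comparable to $|m|^2$ for small $m$. The hypothesis then gives $|m(t)|^2\le k'e^{-\lambda t}|m_0|^2$ for every $m_0$. Therefore $-A$ is Hurwitz, with the required rate; here the rate must be read in the squared sense, and I would make this convention explicit. Once $-A$ is Hurwitz, $\rho_\infty$ is the unique stationary solution. Its covariance satisfies the stationary moment equation $A\Sigma+\Sigma A^\top=2D$ and its characteristic function is Gaussian, which gives the ``moreover'' part provided $\Sigma\succ0$.

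\emph{Necessity of [C1].} Suppose $V\subset\ker D$ is a nontrivial subspace with $A^\top V\subset V$, and let $W$ be a matrix whose rows form a basis of $V$. Then $y=Wx$ obeys a closed, noise-free linear ODE $\dot y=-Ly$ with $WA=LW$, and $L$ is Hurwitz by the previous step. Its law is therefore the pushforward of the $y$-marginal of $\rho_0$ under a contracting linear flow, which converges to $\delta_0$. This is incompatible with convergence to an absolutely continuous $\rho_\infty$, whose $y$-marginal has a density. Equivalently, $\Sigma$ would be degenerate on $V$ and ${\rm KL}(\rho_0\|\rho_\infty)$ would be infinite for every positive $\rho_0$.

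\emph{Sufficiency.} First, [C1] is precisely the Hautus/Kalman dual of controllability of $(-A,D^{1/2})$. Together with [C2], the Lyapunov equation $A\Sigma+\Sigma A^\top=2D$ then has a unique solution with $\Sigma\succ0$, and $\rho_\infty=\mathscr N(0,\Sigma)$ is stationary. Next, change coordinates $x=\Sigma^{1/2}z$ and write $\psi=\ln(\rho/\rho_\infty)$. In these coordinates the generator splits into a symmetric part $\tilde D\succeq0$ and an antisymmetric part, and the drift matrix $C=\Sigma^{-1/2}A\Sigma^{1/2}$ satisfies $C+C^\top=2\tilde D$. I would then introduce the modified Fisher information
\[
\mathcal S_P(\rho)=\int\rho\,\nabla\psi^\top P\nabla\psi\,{\rm d}z ,
\]
with $P\succ0$ chosen such that $PC+C^\top P\succeq 2\mu P$. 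Such a $P$ exists for any $\mu$ below the spectral gap of $C$, or equal to it when $C$ has no defective eigenvalues on the critical line. Differentiating along the flow should give $\tfrac{d}{dt}\mathcal S_P\le-2\mu\,\mathcal S_P$; the Hessian terms contribute a nonpositive quantity because $\tilde D\succeq0$. Since the flow regularizes instantly by hypoellipticity, and [C1] is exactly H\"ormander's condition here, this yields $\mathcal S_P(\rho(t))\lesssim e^{-2\mu t}$ for $t\ge t_0>0$. The Gaussian log-Sobolev inequality ${\rm KL}\le\tfrac12\int\rho|\nabla\psi|^2$ converts this into exponential decay of ${\rm KL}$. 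The short interval $[0,t_0]$ is absorbed into the constant $k$ using the fact that ${\rm KL}$ is nonincreasing.

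The main obstacle I expect is obtaining the decay from ${\rm KL}(\rho_0\|\rho_\infty)$ alone, rather than from the Fisher information, with a uniform constant $k$ and the sharp rate $\lambda$. Two points need care: a regularization estimate of the form $\mathcal S_P(\rho(t_0))\le c\,{\rm KL}(\rho_0\|\rho_\infty)$, and the treatment of defective eigenvalues of $C$ lying exactly at the critical rate. For the first, I would try the standard hypocoercive short-time estimates. For the second, I would fall back on the explicit Gaussian-convolution representation of the solution in order to control the polynomial prefactor; this prefactor is what the ``convergence rate'' convention of [C2] is meant to encode.
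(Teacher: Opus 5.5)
The paper gives no proof of this statement; it quotes it from \cite{Arnold2014}. Your sufficiency sketch (normalize to $\rho_\infty=\mathscr N(0,I)$, use the modified Fisher information $\mathcal S_P$ with $PC+C^\top P\succeq 2\mu P$, then log-Sobolev and hypoelliptic regularization) follows that reference. You were right to flag the rates, but this is an error in the statement, not a convention. With the paper's own definition of rate ($|x(t)|\le k|x_0|e^{-\lambda t}$), the ``only if'' half of [C2] is false. Take $n=1$, $A=D=1$: then ${\rm KL}(\rho(t,\cdot)\|\rho_\infty)\le e^{-2t}{\rm KL}(\rho_0\|\rho_\infty)$, so the estimate holds with $\lambda=2$, yet $\dot x=-x$ only has rate $1$. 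The correct condition is rate at least $\lambda/2$, which is exactly what your translate argument yields. That definition has a uniform constant, so it also rules out Jordan blocks on the critical line. The defective case you list as an obstacle therefore never arises.

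\textbf{Gap in the necessity of [C2].} The argument as written is circular. You only learn that $\rho_\infty$ is Gaussian \emph{after} you know $-A$ is Hurwitz. Yet before that you use that ${\rm KL}(\rho_\infty(\cdot-m)\|\rho_\infty)\asymp|m|^2$ for small $m$. This fails for general densities: if $\rho_\infty$ has compact support, every translate has infinite divergence, and the hypothesis then says nothing about it. The claim that convergence forces a finite second moment is likewise unsupported.

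You need a qualitative step first.
\begin{enumerate}
\item Taking $\rho_0=\rho_\infty$ shows that $\rho_\infty$ is stationary.
\item Let $w^\top A=aw^\top$ with $w\neq0$ and $\Re a\le 0$. Then $w^\top X_t=e^{-at}w^\top X_0+G_t$, where $G_t$ is an independent Gaussian.
\item If $w^{*}Dw>0$, then $\mathbb E|G_t|^2\to\infty$. The laws of $w^\top X_t$ are then not tight, which contradicts stationarity.
\item If $w^{*}Dw=0$ and $\Re a<0$, stationarity forces $w^\top X=0$ almost surely under the density $\rho_\infty$, which is impossible.
\item If $w^{*}Dw=0$ and $\Re a=0$, then $|w^\top X_t|$ is conserved. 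This contradicts convergence from $\rho_0=\rho_\infty\mathbf 1_S/\rho_\infty(S)$ with $S=\{|w^\top x|<r\}$, which has finite divergence.
\end{enumerate}
So $-A$ is Hurwitz. Now set $\Sigma_t=\int_0^t e^{-As}2De^{-A^\top s}{\rm d}s$ and let $t\to\infty$ in $\hat\rho_\infty(\xi)=\hat\rho_\infty(e^{-A^\top t}\xi)e^{-\xi^\top\Sigma_t\xi/2}$. This gives $\rho_\infty=\mathscr N(0,\Sigma)$ with $\Sigma\succ0$, since $\rho_\infty$ is a density. [C1] then follows from $v^\top\Sigma v=2\int_0^\infty|D^{1/2}e^{-A^\top s}v|^2{\rm d}s$. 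Only now is your translate computation exact: $|\Sigma^{-1/2}e^{-At}m_0|^2\le ke^{-\lambda t}|\Sigma^{-1/2}m_0|^2$.

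\textbf{Sufficiency.} The bound $\mathcal S_P(\rho(t_0))\le c\,{\rm KL}(\rho_0\|\rho_\infty)$ that you leave open is a nontrivial entropic hypoelliptic-regularization estimate. It can be bypassed entirely.
\begin{enumerate}
\item In normalized coordinates the transition kernel is $\mathscr N(Mz,I-MM^\top)$ with $M=e^{-Ct}$.
\item For $t>0$ we have $s:=\|M\|_2<1$, because $I-MM^\top=\int_0^t e^{-C\tau}2\tilde De^{-C^\top\tau}{\rm d}\tau\succ0$.
\item The kernel factors as the $\mathscr N(0,I)$-preserving kernel $\mathscr N(Mz/s,\,I-MM^\top/s^2)$, followed by the Ornstein--Uhlenbeck kernel $\mathscr N(sy,(1-s^2)I)$.
\item Data processing for the first factor and the Gaussian log-Sobolev inequality for the second give ${\rm KL}(\rho(t,\cdot)\|\rho_\infty)\le\|e^{-Ct}\|_2^2\,{\rm KL}(\rho_0\|\rho_\infty)$ for every $\rho_0$ of finite divergence.
\end{enumerate}
This needs no matrix $P$ and no regularity of $\rho_0$. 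Your translates attain this bound, so the entropy rate is exactly twice that of $\dot x=-Ax$, which is the corrected theorem.
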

\begin{rem}
Condition C1 is equivalent to $(A,D)$ being controllable. 
\end{rem}
\begin{lem}
\label{lem:AB}The matrix equation 
\[
A\Sigma+\Sigma A^{\top}=-BB^{\top},
\]
has a unique positive definite solution $\Sigma$ if and only if $A$
is Hurwitz and $(A,B)$ is controllable. 
\end{lem}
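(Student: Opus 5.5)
We prove the two implications separately, working only with the Lyapunov equation and the controllability matrix $\mathcal{C}=[B,AB,\dots,A^{n-1}B]$.

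\emph{Sufficiency.} Assume $A$ is Hurwitz and $(A,B)$ is controllable. We take the candidate
\[
\Sigma=\int_0^\infty e^{At}BB^\top e^{A^\top t}\,{\rm d}t ,
\]
which converges because $A$ is Hurwitz. Differentiating $e^{At}BB^\top e^{A^\top t}$ in $t$ and integrating over $[0,\infty)$ gives $A\Sigma+\Sigma A^\top=-BB^\top$.

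Uniqueness follows by a standard argument. If $\Sigma'$ is another solution, then $\Delta=\Sigma-\Sigma'$ satisfies $A\Delta+\Delta A^\top=0$. Hence $\frac{d}{dt}\bigl(e^{At}\Delta e^{A^\top t}\bigr)=0$, so $e^{At}\Delta e^{A^\top t}$ is constant in $t$. Letting $t\to\infty$ gives $\Delta=0$.

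For positive definiteness, suppose $z^\top\Sigma z=0$. Since $z^\top\Sigma z=\int_0^\infty |B^\top e^{A^\top t}z|^2\,{\rm d}t$, we get $B^\top e^{A^\top t}z\equiv0$. Differentiating repeatedly at $t=0$ gives $B^\top (A^\top)^k z=0$ for all $k$, i.e. $z^\top\mathcal{C}=0$. Controllability then forces $z=0$.

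\emph{Necessity.} Assume a unique solution $\Sigma\succ0$ exists.

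First we show $A$ is Hurwitz. Let $A^\top w=\lambda w$ with $w\neq0$ complex. Multiplying the equation by $w^*$ on the left and by $w$ on the right gives
\[
2\,{\rm Re}(\lambda)\,w^*\Sigma w=-|B^\top w|^2\le 0 .
\]
Since $w^*\Sigma w>0$, this yields ${\rm Re}\lambda\le0$, and ${\rm Re}\lambda=0$ would force $B^\top w=0$.

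Next we establish controllability, using the PBH test. Suppose $(A,B)$ is not controllable. Then there is a left eigenvector $w\neq0$ with $w^*A=\bar\lambda w^*$ (equivalently $A^\top w=\lambda w$ up to conjugation) and $w^*B=0$. The identity above then gives ${\rm Re}\lambda\,w^*\Sigma w=0$, so ${\rm Re}\lambda=0$.

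This is the main obstacle: a purely imaginary eigenvalue must be ruled out using uniqueness, since positivity alone does not exclude it. For example, $A$ skew, $B=0$, $\Sigma=I$ solves the equation. Uniqueness excludes it as follows. The Lyapunov operator $X\mapsto AX+XA^\top$ has eigenvalues $\lambda_i+\lambda_j$. A purely imaginary $\lambda$ makes $\lambda+\bar\lambda=0$, so the operator is singular. The equation then has an affine family of solutions. Small perturbations of $\Sigma\succ0$ along this family remain positive definite, which contradicts uniqueness. This shows $(A,B)$ is controllable.

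Finally, to complete the Hurwitz claim, consider an eigenvector $w$ with ${\rm Re}\lambda=0$. The identity gives $B^\top w=0$, so $w$ is an uncontrollable mode, which was just excluded. Hence ${\rm Re}\lambda<0$ for every eigenvalue, and $A$ is Hurwitz.

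Under the same uniqueness interpretation, the uncontrollable-mode case is handled by exactly the singular-operator argument. This completes the plan.
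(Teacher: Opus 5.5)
The paper gives no proof of this lemma; it is quoted in the appendix as a standard fact from Lyapunov theory, so there is no argument of the paper's to compare with. Your proof is the standard one: the controllability Gramian for sufficiency, and for necessity the identity $2\,\mathrm{Re}(\lambda)\,w^*\Sigma w=-|B^\top w|^2$ combined with the PBH test. It is correct. You are also right that uniqueness is indispensable in the ``only if'' direction. Your example with skew-symmetric $A$, $B=0$, $\Sigma=I$ shows that existence of a positive definite solution alone implies neither property.

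One step should be tightened. The Kronecker-sum spectrum $\{\lambda_i+\lambda_j\}$ only shows that $X\mapsto AX+XA^\top$ has a nonzero kernel on all complex, possibly non-symmetric, matrices. To contradict uniqueness of a \emph{positive definite} solution, you need a nonzero \emph{real symmetric} element of the kernel.

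You can construct one directly, which also avoids the spectral fact. Suppose $Av=\mu v$ with $\mu\in i\mathbb{R}$ and $v=a+ib\neq0$. Then $A(vv^*)+(vv^*)A^\top=(\mu+\bar\mu)vv^*=0$. Since $A$ is real, $X=\mathrm{Re}(vv^*)=aa^\top+bb^\top$ is a nonzero positive semidefinite matrix with $AX+XA^\top=0$. Hence $\Sigma+\epsilon X\succ0$ solves the equation for every $\epsilon\ge0$.

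Finally, your identity already gives a useful variant with no uniqueness hypothesis. If $(A,B)$ is controllable and some $\Sigma\succ0$ solves the equation, then $A$ is Hurwitz, because $B^\top w\neq0$ for every eigenvector $w$ of $A^\top$. This is exactly the version needed in the proof of Corollary~\ref{cor:det-LTI2}, where uniqueness of $\Sigma$ is not known in advance.
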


\begin{proof}[Proof of Theorem \ref{thm:fromArnold}]
By Theorem \ref{thm:Arnold}, the system \eqref{sys:LTI-sto}
can be controlled by static state-feedback $u=Kx$ to a non-degenerate Gaussian
distribution at steady state if and only if $A+B_{1}K$ is Hurwitz
and $(A+B_{1}K,B_{2})$ is controllable. Therefore, it is sufficient
to show that: there exists a matrix $K\in\mathbb{R}^{m\times n}$
such that $A+B_{1}K$ is Hurwitz and $(A+B_{1}K,B_{2})$ is controllable
if and only if the following conditions hold:
\begin{enumerate}
\item $(A,B_{1})$ is stabilizable, 
\item $(A,[B_{1}\;B_{2}])$ is controllable, 
\item $B_{2}\neq0$. 
\end{enumerate}

\emph{Necessity}. Suppose there exists $K$ such that $A+B_{1}K$
is Hurwitz and $(A+B_{1}K,B_{2})$ is controllable. Then it is obvious
that $(A,B_{1})$ is stabilizable and $B_{2}\neq0$. $(A,[B_{1}\;B_{2}])$
being controllable can be deduced from the following inequality: 
\begin{align*}
 & {\rm rank}\begin{bmatrix}sI-A-B_{1}K, & B_{2}\end{bmatrix}\\
\le & {\rm rank}\begin{bmatrix}sI-A-B_{1}K, & B_{1}, & B_{2}\end{bmatrix}\\
= & {\rm rank}\begin{bmatrix}sI-A, & B_{1}, & B_{2}\end{bmatrix}.
\end{align*}

\emph{Sufficiency.} If $(A,B_{1})$ is controllable. For a fixed $K$,
let $p^{\top}_{c}$ be a left eigenvector of $A+B_{1}K$ associated
with the eigenvalue $\lambda$. Thanks to controllability, we can
place the eigenstructure freely, and thus it is easy to achieve $p^{\top}_{c}B_{2}\ne0$
for all left eigenvectors $p^{\top}_{c}$ of $A+B_{1}K$ while making
the latter Hurwitz since $B_{2}\ne0$. This leads to sufficiency
when $(A,B_{1})$ is controllable.

If $(A,B_{1})$ is not fully controllable, to which we can apply controllability
decompos\-ition -- without loss of generality, we may assume $A$,
$B_{1}$, and $B_{2}$ take the following form,
\[
A=\begin{bmatrix}A_{c} & A_{cu}\\
0 & A_{u}
\end{bmatrix},\quad B_{1}=\begin{bmatrix}B_{1c}\\
0
\end{bmatrix},\quad B_{2}=\begin{bmatrix}B_{2c}\\
B_{2u}
\end{bmatrix},
\]
where: 
\begin{itemize}
\item $(A_{c},B_{1c})$ is controllable ($A_{c}\in\mathbb{R}^{n_{c}\times n_{c}}$,
$B_{1c}\in\mathbb{R}^{n_{c}\times m}$), 
\item $A_{u}$ is Hurwitz ($A_{u}\in\mathbb{R}^{n_{u}\times n_{u}}$). 
\end{itemize}
It follows that $(A_{u},B_{2u})$ is controllable. In fact, for any
left eigenvector $v^{\top}_{u}$ of $A_{u}$ ($v^{\top}_{u}A_{u}=\lambda v^{\top}_{u}$),
take $v^{\top}=[0\;v^{\top}_{u}]$. Then $v^{\top}A=\lambda v^{\top}$,
$v^{\top}B_{1}=0$. By controllability of $(A,[B_{1},B_{2}])$, there
holds $v^{\top}B_{2}=v^{\top}_{u}B_{2u}\neq0$. Hence, $(A_{u},B_{2u})$
is controllable.

Let $K=[K_{c}\;K_{u}]$. Choose $K_{c}$ such that the eigenvalues
of $A_{c}+B_{1c}K_{c}$ are all real, negative, and do not overlap
with $\sigma(A_{u})$.

Below, we proceed to construct $K_{u}$ to ensure the controllability
of $(A+B_{1}K,B_{2})$. 

Suppose there exists a row vector $[p^{\top}_{c},q^{\top}_{u}]$ satisfying
\begin{equation}
[p^{\top}_{c},q^{\top}_{u}]\begin{bmatrix}\lambda I-A-B_{1}K, & B_{2}\end{bmatrix}=0.\label{eq:pc-qu}
\end{equation}
Expressing the relation in terms of the controllable decomposition
gives 
\begin{align}
p^{\top}_{c}(\lambda I_{n_{c}}-(A_{c}+B_{1c}K_{c})) & =0,\label{eq1}\\
-p^{\top}_{c}(A_{cu}+B_{1c}K_{u})+q^{\top}_{u}(\lambda I_{n_{u}}-A_{u}) & =0,\label{eq2}\\
p^{\top}_{c}B_{2c}+q^{\top}_{u}B_{2u} & =0.\label{eq3}
\end{align}
We show that there exists a real constant matrix $K_{u}$ such that
the preceding equations imply $[p^{\top}_{c},q^{\top}_{u}]=0$.

\emph{Case 1:} $\lambda\notin\sigma(A_{c}+B_{1c}K_{c})$. From (\ref{eq1})
we deduce $p^{\top}_{c}=0$ and as a consequence $q^{\top}_{u}(\lambda I_{n_{u}}-A_{u})=0$
and $q^{\top}_{u}B_{2u}=0$. Since $(A_{u},B_{2u})$ is controllable,
this implies $q^{\top}_{u}=0$. Thus $[p^{\top}_{c},q^{\top}_{u}]=0$.

\emph{Case 2:} $\lambda\in\sigma(A_{c}+B_{1c}K_{c})$. We shall show
that $p^{\top}_{c}=0$. Otherwise, $p^{\top}_{c}$ is a left eigenvector
of $A_{c}+B_{1c}K_{c}$ corresponding to $\lambda$. By construction,
$\sigma(A_{c}+B_{1c}K_{c})\cap\sigma(A_{u})=\emptyset$, we can solve
$q^{\top}_{u}$ from (\ref{eq2}) as $q^{\top}_{u}=p^{\top}_{c}(A_{cu}+B_{1c}K_{u})(\lambda I_{n_{u}}-A_{u})^{-1}$.
Substituting this into the left hand side of (\ref{eq3}) yields 
\[
p^{\top}_{c}[B_{2c}+(A_{cu}+B_{1c}K_{u})(\lambda I_{n_{u}}-A_{u})^{-1}B_{2u}].
\]
Denote 
\[
M(\lambda)=B_{2c}+A_{cu}(\lambda I-A_{u})^{-1}B_{2u}+B_{1c}K_{u}(\lambda I-A_{u})^{-1}B_{2u}.
\]
We show it is possible to construct $K_{u}$ such that $p^{\top}_{c}M(\lambda)\neq0$
for all $\lambda\in\sigma(A_{c}+B_{1c}K_{c})$, which in turn leads
to a contradiction in view of (\ref{eq3}).

Recall that the eigenvalues of $A_{c}+B_{1c}K_{c}$ are negative real,
which we now denote as $\{\lambda_{i}\}^{n_{c}}_{i=1}$, and let $\{p^{\top}_{i}\}^{n_{c}}_{i=1}$
be the corresponding left eigenvectors. The term $p^{\top}_{i}M(\lambda_{i})$
can be written as $v^{\top}_{i}+b^{\top}_{i}K_{u}F^{\top}_{i}$, in
which 
\begin{align*}
v^{\top}_{i} & =p^{\top}_{i}(B_{2c}+A_{cu}(\lambda_{i}I-A_{u})^{-1}B_{2u})\in\mathbb{R}^{1\times p},\\
b^{\top}_{i} & =p^{\top}_{i}B_{1c}\in\mathbb{R}^{1\times m},\\
F^{\top}_{i} & =(\lambda_{i}I-A_{u})^{-1}B_{2u}\in\mathbb{R}^{n_{u}\times p}.
\end{align*}
Define the sets,
\[
S_{i}=\{K_{u}\in\mathbb{R}^{m\times n_{u}}:F_{i}K^{\top}_{u}b_{i}+v_{i}=0\},
\]
and $S=S_{1}\cup\cdots\cup S_{n_{c}}$. Then each $S_{i}$ is nowhere
dense, neither is $S$, and as a result, the complement of $S$, i.e.,
the set of $K_{u}\in\mathbb{R}^{m\times n_{u}}$ for which $p^{\top}_{i}M(\lambda_{i})\ne0$
for all $i$, is dense.

In both cases, we have shown $[p^{\top}_{c},q^{\top}_{u}]=0$ by a
suitable choice of $K_{u}$, which makes the pair $(A+B_{1}K,B_{2})$
controllable. 
\end{proof}

\bibliographystyle{siam}
\bibliography{OT}

\end{document}